\title[{}]{Hypercubic structures behind $\hat{Z}$-invariants}
\author{Shoma Sugimoto}
\address{Yau Mathematical Sciences Center, Tsinghua University}
\email{shomasugimoto361@gmail.com}
\definecolor{rouge}{rgb}{0.85,0.1,.4}
\definecolor{bleu}{rgb}{0.1,0.2,0.9}
\definecolor{violet}{rgb}{0.7,0,0.8}
\theoremstyle{definition}
\newtheorem{definition}{Definition}[section]
\newtheorem{theorem}[definition]{Theorem}
\newtheorem{corollary}[definition]{Corollary}
\newtheorem{lemma}[definition]{Lemma}
\newtheorem{remark}[definition]{Remark}
\newtheorem*{fundamental-problem}{\underline{Fundamental Problem (FP)}}
\newtheorem*{guiding-principle}{\underline{Fundamental Problem from the new perspective}}
\numberwithin{equation}{section}
\newcommand{\Z}{\mathbb{Z}}
\newcommand{\C}{\mathbb{C}}
\newcommand{\ch}{\operatorname{ch}}
\newcommand{\g}{\mathfrak{g}}
\newcommand{\sll}{\mathfrak{sl}}
\newcommand{\osp}{\mathfrak{osp}}
\newcommand{\SL}{SL_2}
\newcommand{\Bits}[1]{\Z_2^{#1}}
\newcommand{\Left}[1]{\mathcal{#1}^{\operatorname{left}}}
\newcommand{\Right}[1]{\mathcal{#1}^{\operatorname{right}}}
\newcommand{\leftop}[1]{\operatorname{ltop}(\mathcal{#1})}
\newcommand{\rightop}[1]{\operatorname{rtop}(\mathcal{#1})}
\newcommand{\gap}[1]{\operatorname{gap}(\mathcal{#1})}
\begin{document}
\maketitle

\begin{abstract}
We propose an abelian categorification of \textit{$\hat{Z}$-invariants} \cite{GPPV} for Seifert $3$-manifolds.
First, we give a recursive combinatorial derivation of these $\hat{Z}$-invariants using graphs with certain hypercubic structures.
Next, we consider such graphs as annotated Loewy diagrams in an abelian category, allowing non-split extensions by the ambiguity of embedding of subobjects.
If such an extension has good algebraic group actions, then the above derivation of $\hat{Z}$-invariants in the Grothendieck group of the abelian category can be understood in terms of the theory of \textit{shift systems} \cite{LS}, i.e., Weyl-type character formula of the \textit{nested} Feigin-Tipunin constructions.
For the project of developing the dictionary between logarithmic CFTs and 3-manifolds \cite{CFGHP}, these discussions give a glimpse of a hypothetical and prototypical, but unified construction/research method for the former from the new perspective, \textit{reductions of representation theories by recursive structures}.
\end{abstract}

\tableofcontents

\section{Introduction}

\subsection{Background and Motivation}
\label{subsection: background, motivation and program}
The study of \textit{logarithmic conformal field theory} (\textit{LCFT}, or \textit{logarithmic/irrational vertex (operator) algebra}) has become increasingly important in both mathematics and physics.
As an example, in $2017$, Gukov-Pei-Putrov-Vafa \cite{GPPV} introduced the \textit{$\hat{Z}$-invariant} for plumbed graphs. 
This is a (plumbed) 3-manifold topological invariant that takes values in the $q$-series and converges to the Witten-Reshetikhin-Turaev invariant at roots of unity \cite{Mur3} and has been found to give rich examples of ``spoiled" modular forms, such as mock/false theta functions. 
Since the characters of rational CFT are closed under modular transformations \cite{Zhu}, if CFTs can be constructed that would have $\hat{Z}$-invariants as their characters, then they should give rich and interesting examples of LCFT. 
Indeed, in \cite{CCFGH,CFGHP,CCKPS} etc., for various reasons they have developed such a ``dictionary" between 3-manifolds and LCFTs. 
Our goal is to provide a \textit{new mathematical approach/perspective} to their project, which seeks to link LCFT, quantum topology, number theory, and $3$d-$3$d correspondence in physics etc.
Let us raise the fundamental problem.

\begin{fundamental-problem}
For any simple Lie algebra $\g$,  nilpotent element $f\in\g$, and plumbed graph $\Gamma$, give a \textbf{unified construction and research method} for LCFTs such that their characters almost give the (W-algebraic variations of the) corresponding $\hat{Z}$-invariants (see also Remark \ref{remark: restriction of scope in the present paper} below).
\end{fundamental-problem}

However, at this time not much is known about  mathematical constructions and research methods of LCFTs. 
Indeed, at least in the mathematical level, the above correspondence is known only for the two easiest examples, i.e. the cases of $3$- or $4$-legs star graphs (the \textit{triplet Virasoro VOA} at level $\tfrac{1}{p_1}$ or $\tfrac{p_2}{p_1}$ for $p_1,p_2\in\Z_{\geq 2}$, respectively.
In \cite{CCFGH,CFGHP}, they proved that the characters of these LCFTs almost gives the corresponding $\hat{Z}$-invariants).
One of the reasons why the study of LCFTs is more difficult than rational cases is that the VOA structures are complicated, making it difficult to use algebraic and straightforward approaches.
For example, the two easiest cases above have long been studied only for $\g=\sll_2$, and no results were published for general simple Lie algebra $\g$ for many years until \cite{S1}.
Perhaps due to this lack of examples and research methods for LCFTs, in the mathematical level, studies of $\hat{Z}$-invariants and the above ``dictionary" has so far been mainly concerned with the level of $q$-series.
In contrast, our approach focuses not on the $q$-series level, but rather on the construction/research method of LCFTs behind $\hat{Z}$-invariants.
In other words, it is intended to provide a bridge between the world of $q$-series and that of LCFTs.
Let us explain the detail in the following two paragraphs.

\begin{remark}
\label{remark: restriction of scope in the present paper}
Throughout this paper, we sometimes use the phrase ``$X$ \textit{almost} gives $Y$". 
This means that they will match except for minor differences, such as $q$-power, Dedekind eta function, taking linear combinations, etc. 
Also, the author have in mind the $\hat{Z}$-invariants of the \textit{negative Seifert/false theta} side, and cannot be certain at this time about the \textit{positive Seifert/mock theta} side or more complicated cases, because the corresponding VOAs have not yet been mathematically well-studied.
However, our purpose here is not to give an elaborate correspondences on $q$-series or to go deep into the modern studies on modularity, but to give a direction/object that seems plausible and interesting for the mathematically almost unknown problem of the correspondence between $\hat{Z}$-invariants and LCFTs.
Thus, we consider such easiest cases and relatively rough correspondences at the level of $q$-series, and leave more precise correspondences and deep modularity theorems for future works.
\end{remark}

In $2010$, Feigin-Tipunin \cite{FT} conjectured that the \textit{multiplet (principal) W-algebra} (i.e. higher rank version of the triplet Virasoro VOA) associated with a simply-laced simple Lie algebra $\g$ has a certain geometric construction called \textit{Feigin-Tipunin (FT) construction}, i.e. the space of global sections 
\begin{align*}
H^0(G\times_BV)  
\end{align*}
of a VOA-bundle $G\times_BV$ over the flag variety $G/B$, and that a certain analogue of the Borel-Weil-Bott (BWB) theorem holds. 
The author proved their conjectures in \cite{S1,S2} and used them to prove some basic properties of the corresponding multiplet principal W-algebras, such as Weyl-type character formulas, decomposition theorem, and construction of irreducible modules, etc. 
Similar results were also extended to the $V^{(p)}$-algebra (i.e. doublet/triplet affine $\sll_2$) in \cite{CNS} by combining the method in \cite{S1,S2} with the inverse quantum Hamiltonian reduction.
On the other hand, it is \textit{not} the VOA structures that plays an essential role in the discussion of \cite{S1,S2}, but rather a certain hidden \textit{Lie algebraic pattern} (Although the situation is a bit more complicated in \cite{CNS}, the main flow of the discussion itself is still the same because such a Lie algebraic pattern is preserved under the various operations on VOAs under consideration, such as (inverse) quantum Hamiltonian reductions). 
In \cite{LS}, the author extracted such a hidden Lie algebraic pattern and named \textit{shift system}, and proved that most of the arguments in \cite{S1,S2} are valid even under such an \textit{abstracted purely Lie algebraic setting}. 
Let us explain on it a bit more (for more detail, see \cite[Introduction]{LS}).
The shift system consists of the triple
\begin{align*}
(\Lambda,\uparrow,\{V_\lambda\}_{\lambda\in\Lambda})
\end{align*}
of a parameter $W$-module $\Lambda$, a shift map $\uparrow\colon W\times\Lambda\rightarrow P$, and the family of weight $B$-modules $\{V_\lambda\}_{\lambda\in\Lambda}$ with a few simple axioms (see \cite[Definition 1.6]{LS}).
Note that the definition is purely Lie algebraic and does not need any VOA structures.
If an ``input data” $B$-module $V$ fits into a shift system (i.e., satisfies the axiom of shift system), then the main results of \cite{S1,S2} automatically hold for the ``output data” $H^0(G\times_BV)$ which is isomorphic to the \textit{maximal $G$-submodule} of $V$ in our case 
(Note that in the application to LCFT, $V$ is a simpler CFT such as free-field algebra, and unlike LCFT, algebraic/straightforward approaches are often valid (e.g., reduction to rank $1$ by orthogonal decompositions). 
In other words, the shift system theory is a machinery that \textit{reduces} the study of LCFTs to that of simpler CFTs.
Indeed, such a verification of the axiom was done step-by-step and in detail in \cite[Section 2.4]{LS} for the cases where $V$ is the positive definite rescaled root lattice VOA $V_{\sqrt{p}Q}$, resulting in most results in \cite{S1,S2} being extended to any simple Lie algebra $\g$ and Lie superalgebra $\osp(1|2n)$, respectively). 
In particular, since the BWB-type theorem holds, the \textit{Weyl-type character formula} 
\begin{align}
\label{general Weyl character formula in introduction}
\ch H^0(G\times_BV_\lambda)
=
\sum_{\beta\in P_+}\dim L_{\beta}
\sum_{\sigma\in W}
(-1)^{l(\sigma)}
\ch V_{\sigma\ast\lambda}^{h=\beta-\sigma\uparrow\lambda}
\end{align}
follows as its corollary, so that the character of $H^0(G\times_BV_\lambda)$ is \textit{reduced} to that of $V_{\sigma\ast\lambda}$. 
These results suggest that, at least in some aspects of the mathematical study of LCFT, one should develop the \textit{geometric representation theory} based on certain Lie algebraic patterns behind various LCFTs in common, rather than directly examining the complicated VOA structures of individual cases algebraically.
In addition, the conditions of the BWB-type theorem of FT constructions above are very similar to those of the \textit{modular representation theory} (cf. \cite{Jan}), which suggests that the geometric representation theory of FT constructions may also be effective for studying the \textit{log Kazhdan-Lusztig correspondence}, i.e. expected categorical equivalences between LCFTs and \textit{quantum groups}. 
Therefore, this theory would be a candidate for the \textbf{``unified research method"} in the FP above.

\begin{remark}
\label{remark: internal structure vs external structure}
Let us explain the significance of the shift system theory from the perspective of the study of \textit{$W$-algebras}.
Since $H^0(G\times_BV_\lambda)$ has the natural $G$-action, we have the decomposition
\begin{align*}
H^0(G\times_BV_\lambda)
\simeq
\bigoplus_{\beta\in P_+}L_\beta\otimes\mathcal{W}_{-\beta+\lambda}
\end{align*}
as a $G$-module, where $L_\beta$ is the finite-dimensional irreducible $G$-module with highest weight $\beta$ and $\mathcal{W}_{-\beta+\lambda}$ is the multiplicity of a weight vector of $L_\beta$.
If $H^0(G\times_BV_0)$ has a V(O)A structure, the $G$-orbifold $\mathcal{W}_0=H^0(G\times_BV_0)^G$ is the vertex (operator) subalgebra of $H^0(G\times_BV_0)$.
This is the W-algebra in our case, and each $\mathcal{W}_{-\beta+\lambda}$ is a 
$\mathcal{W}_0$-module (in other words, the multiplet W-algebra $H^0(G\times_BV_0)$ can be regarded as an infinite extension of $\mathcal{W}_0$).
There has been a lot of works on W-algebras, but our cases have exceptional levels and existing results are often inapplicable.
However, in our cases, through the geometric representation theory for the extension $H^0(G\times_BV_\lambda)$, we can consequently obtain results on the $\mathcal{W}_0$ and its modules $\mathcal{W}_{-\beta+\lambda}$ (cf. the simplicity theorems of $\mathcal{W}_{-\beta+\lambda}$ in \cite{S2,LS,CNS}. 
In particular, in \cite{S2,LS}, the simplicity theorem of the Arakawa-Frenkel module at irrational levels \cite{ArF} is extended to our positive integer level cases).
This approach/perspective of obtaining information about $\mathcal{W}_{-\beta+\lambda}$ by 
focusing on the \textit{Lie algebraic patterns external to $\mathcal{W}_{-\beta+\lambda}$} (i.e., the FT construction and shift system), rather than the \textit{internal (VOA) structure of $\mathcal{W}_{-\beta+\lambda}$} which is \textit{non-essential and decorative} from the perspective of the Lie algebraic patterns (because this is just the \textit{multiplicity of a weight vector} of $L_\beta$)
is not only one demonstration of the above idea, but also plays a central role in this paper.
\end{remark}

On the other hand, all the examples \cite{S1,S2,LS,CNS} in the previous paragraph are multiplet W-algebras at positive integer levels, i.e. just W-algebraic variations of LCFTs corresponding to the easiest plumbed graph, $3$-legs star graph.
How can we construct and study LCFTs corresponding to general star graphs or plumbed graphs?
For this question, the author conceived the following idea a few years ago 
(In fact, his main motivation for introducing the shift system \cite{LS} (and for writing this paper) was to mathematically express this idea which was very roughly sketched in \cite{S3}).
Let $\tilde{V}:=\tilde{V}_{\lambda_1,\dots,\lambda_N}$ be a weight $B$-module which forms a shift system with respect to the first parameter $\lambda_1\in\Lambda_1$.
The FT construction $H^0(G\times_B\tilde{V})$ has a natural $G$-module structure, but suppose that there exists a suitable quotient $\tilde{H}^0(G\times_B\tilde{V})$ of $H^0(G\times_B\tilde{V})$, which in turn forms a shift system with respect to a $B$-action with the same $H$-action, and the second parameter $\lambda_2\in\Lambda_2$.
If such a situation holds for the parameters $\lambda_1,\dots,\lambda_N$ sequentially, that is, if we consider a \textit{recursive shift system} 
(i.e., 
a special kind of shift system $\tilde{V}:=\tilde{V}_{\lambda_1,\dots,\lambda_N}$ such that for each $0\leq n\leq N-1$, there exists a quotient
\begin{align}
\label{quotient of the nested FT constructions}
\underbrace{\tilde{H}^0(G\times_B\tilde{H}^0(G\times_B\cdots\tilde{H}^0(G\times_B}_{\text{$n$-times}}\tilde{V})\cdots))
\ \ \text{of} \ \ 
\underbrace{{H}^0(G\times_B\tilde{H}^0(G\times_B\cdots\tilde{H}^0(G\times_B}_{\text{$n$-times}}\tilde{V})\cdots))
\end{align}
such that the LHS forms a shift system with respect to the parameter $\lambda_{n+1}\in\Lambda_{n+1}$), 
then by taking the quotients of the global section functors $\tilde{H}^0(G\times_B-)$ repeatedly, we finally obtain the \textit{nested FT construction}
\begin{align}
\label{nested FT construction in general cases}
\underbrace{H^0(G\times_B\tilde{H}^0(G\times_B\cdots\tilde{H}^0(G\times_B}_{\text{$N$-times}}\tilde{V})\cdots)).
\end{align}
By construction, the shift system appears at each recursive step, so the geometric representation theory of FT constructions in the previous paragraph can be applied repeatedly.
In particular, if a suitable equality holds between the characters of both sides of \eqref{quotient of the nested FT constructions} for $0\leq n\leq N-1$, then by repeatedly applying the Weyl-type character formula \eqref{general Weyl character formula in introduction} recursively, the character of \eqref{nested FT construction in general cases} can finally be computed from $\ch\tilde{V}_{w_1\ast\lambda_1,\dots,w_N\ast\lambda_N}$ ($w_1,\dots,w_N\in W$).
Surprisingly, this almost gives the \textit{``bosonic formula" of the $\hat{Z}$-invariant} of the $(N+2)$-legs star graphs (More precisely, consider the subspace of the Cartan weight $0$).
In other words, we obtain a \textit{recursive computation algorithm} for the $\hat{Z}$-invariants which has a certain \textit{representation-theoretic interpretation}.
It suggests that by increasing the recursion depth (i.e. number of recursion steps required to reach the base case $\tilde{V}$), one would expect the LCFTs corresponding to general star graphs to be constructed, and further that their representation theories are to some extent controlled by those of the ``input data" $\tilde{V}$ and the geometric representation theory of FT constructions
(The cases of general plumbed graphs are more complicated, but the bosonic forms of the $\hat{Z}$-invariants (cf. \cite[Proposition 4.2]{Mur2}) imply that the basic idea would be still the same).
Therefore, the study of mechanisms that realize such recursive shift systems
is not only a candidate for the  \textbf{``unified construction"} in the FP
above, but also complementary to the ``unified research method" in the previous paragraph.
Also, note that these theories, hereafter collectively referred to as the \textit{shift system theory}, are strongly motivated by FP, but are \textit{independent of CFT} because they do not assume any V(O)A structure.
Rather, the abstraction of the V(O)A structure enables us to develop such a \textit{``hypothetical solution"} to FP up front, even though FP is far beyond our current mathematical understanding of LCFT.
It would be beneficial not only in making the logical structure of FP transparent (i.e. separating the parts where V(O)A structures are essential from the other), but also in improving accessibility for researchers with more general backgrounds.
Indeed, as explained in Section \ref{section: technical summary} below, the shift system theory can be regarded as an \textit{abelian categorization} of $\hat{Z}$-invariants, or the abelian categorical aspect of the module category of the corresponding LCFTs.

\subsection{Technical summary}
\label{section: technical summary}
In this paper, we take the first step toward the theory of recursive shift systems for $\g=\sll_2$.
Leaving the detailed and precise discussion to Section \ref{section: combinatorics} and \ref{section: Lie algebraic aspects}, this subsection provides the technical motivation and overview of this paper.
Although most of the techniques used in this paper are only elementary combinatorics, it might be useful to have diagrammatic images described below to understand the meaning.

The key condition in the shift system is the existence of short exact sequences of $B$-modules called \textit{Felder complexes} (see Section \ref{subsection: recursive shift systems}).
A Felder complex has the following diagrammatic representation (Felder diagram): 
First, Cartan weights $h$ and dimensions $d$ of the $B$-modules are aligned horizontally and vertically, respectively.
Then, according to the Felder complex (i.e., from sub to quotient), draw arrows between the coordinates $(h,d)$ representing the corresponding multiplicity spaces of weight vectors of the $B$-modules (these Felder diagrams look like \cite[Figure 4,5]{FGST}).
By taking $H^0(\SL\times_B-)$ (FT construction), the subdiagram corresponding to the maximal $\SL$-part is extracted.
Although the Felder diagram itself is simple, we consider more complicated situations.
That is, when the coordinates $(h,d)$ are not the smallest units in the discussion, but have their internal structures.
For example, by considering an \textit{annotated Loewy diagram} for an object $V$ of an abelian category (or, by Freyd-Mitchell embedding theorem, a module $V$ over a ring $R$), we have a \textit{directed acyclic graph (DAG)} that colored by the simple composition factors of $V$.
If $V$ is in a shift system with respect to a $B$-action commuting with the $R$-action, then the multiplicity space of each weight vector might not be a node of the DAG, but a more complicated annotated Loewy diagram.
However, as mentioned in Remark \ref{remark: internal structure vs external structure}, the internal structures of the multiplicity spaces are not important in the shift system theory, and it can be examined to some extent by the external Lie algebraic patterns.
Therefore, to construct a recursive shift system from given DAGs or corresponding objects, it is important to ``push" their complexity into the internal structures of the multiplicity spaces, so that the whole can be regarded as a Felder diagram.
As a recursive structure to realize such a situation repeatedly, this paper uses \textit{hypercube graphs} (recall that an $(n+1)$-cube graph is realized by moving an $n$-cube graph).
As described below, we aim to give a mechanism such that at $n$-th recursive step $(1\leq n\leq N)$, the multiplicity spaces have the structure of $2(N-n)$-cube graphs, and $H^0(\SL\times_B-)$ and the quotient in the recursive shift system can continue to be regarded as ``contractions" of the hypercube graphs from different directions, respectively.

Let us give an overview of this paper.
In Section \ref{section: combinatorics}, we first replace each coordinate of the Felder diagram above with a $2N$-cube graph $(\pm^N|\pm^N)$ appropriately colored by $C_N:=\Bits{N}\times\Z$ and \textit{``fragments"} it into $2^N$ DAGs $[\lambda|\pm^N]$ ($\lambda\in\Bits{N}$. For $N=2$, see \cite[Figure 2,3]{FGST}).
For this fragmented $2N$-cubic Felder diagram $[\pm^N|\pm^N]:=\bigsqcup_{\lambda\in\Bits{N}}[\lambda|\pm^N]$, we define the ``horizontal halving" $\Gamma$ (i.e., halving the color $\Bits{N}$ of each $\lambda$-fragment) and the ``vertical halving" $/\sim$ (i.e., halving the number of $\lambda$-fragments), and by repeating these steps, ``transport" the rightmost DAG $[+^N|-^N]$ to the leftmost isomorphic DAG $[-^N|+^N]$.
This is a recursive procedure, described by simple symbolic operations.
In addition, the same relational formulas for the characters of DAGs can be used in each recursive step, and the repeated use of them provides a character formula of $[+^N|-^N]$ in terms of the characters of $[\lambda|\pm^N]$.
This is our ``bosonic formula" for $\hat{Z}$-invariants of $(N+2)$-leg star graphs (or corresponding Seifert $3$-manifolds).
In fact, if we assign to each connected component of $[\lambda|\pm^N]$ an appropriate power of $q$ (i.e., ``character of the Fock module"), this almost gives the $\hat{Z}$-invariants, and in the known cases $N=1,2$, it agrees perfectly with the characters of the corresponding simple modules of the singlet Virasoro VOAs.
As mentioned in the beginning of this paper, $\hat{Z}$-invariant itself is a very rich subject with much background.
Nevertheless, the ``bosonic formula" is derived from such an elementary combinatorics.
It suggests that certain aspects of the complexity of the LCFTs and $\hat{Z}$-invariants can be described by the shift system theory (as discussed below, it is our abelian categorification of $\hat{Z}$-invariants) and its Grothendieck group, respectively.

To consider recursive shift systems (in particular, $H^0(\SL\times_B-)$ and the quotients), in Section \ref{section: Lie algebraic aspects} we regard the DAGs in Section \ref{section: combinatorics} as annotated Loewy diagrams in an abelian category.
Our aim is to consider $[\pm^N|\pm^N]$ as a recursive shift system by corresponding $\Gamma$ and $/\sim$ above to $H^0(\SL\times_B-)$ and the quotients in the recursive shift system, respectively.
However, as the name ``fragment" suggests, this is impossible as long as we consider purely discrete and combinatorial situations such as Section \ref{section: combinatorics}.
Because, except for the cases of $N=0,1$, there is no $B$-action commuting with the edges of $[\lambda|\pm^N]$ due to the complicated graph structure.
To solve this problem, as mentioned above, we consider ``pushing" the complexity of the graph to the internal structure of the multiplicity spaces of weight vectors by ``\textit{defragmenting}" $[\pm^N|\pm^N]$.
Here, the degree of freedom by \textit{linear combinations} in the algebraic setting plays an essential role.
That is, when multiple isomorphic simple objects are included, there is an ambiguity regarding how to embed subobjects (in other words, when multiple nodes with the same color are included, we can consider their ``linear combination"), which allows us to deform the DAGs.
\footnote{Our story of ``simplifying computation by considering good linear combinations of bits" is reminiscent of the famous slogan of \textit{quantum computation}. However, the author is a layman and will not go further in this paper.}
Unfortunately, the deformation of $[\pm^N|\pm^N]$ such that it consists a recursive shift system is not yet given in this paper (see Remark \ref{remark: how to construct the B-actions?}).
However, conversely, if a deformation of $[\pm^N|\pm^N]$ consists a recursive shift system, then it is a ``defragmentation" (Definition \ref{def: defragmentation}), i.e., we can continue to regard the multiplicity spaces of weight vectors as hypercube graphs at each recursive steps, as described above (see Corollary \ref{recursive shift system is defragmentation}).

From the above discussion, we can expect an abelian categorization of the $\hat{Z}$-invariants for Seifert $3$-manifolds with $(N+2)$-exceptional fibers (Definition \ref{definition: hypercubic categorification}).
Namely, such a category itself has the nested structure derived from the hypercubic graphs and the mechanism of (de)fragmentations across the hierarchy of the nesting.
As described in \eqref{quotient of the nested FT constructions}, in a recursive shift system, the FT construction must be applied sequentially to each bit (or variable) $\lambda_1,\dots,\lambda_N$, which is  impossible for the ``multivariable" $\lambda$-fragments $[\lambda|\pm^N]$. 
However, by ``univariating" $\bigoplus[\lambda|\pm^N]$ by the defragmentation expressed by the integral sign (Definition \ref{def: defragmentation}), the shift system theory can be applied repeatedly.
On the other hand, if we consider such categories to be the module categories of the corresponding hypothetical LCFTs, comparing our DAGs with those for the known cases $N=1,2$, we can get a glimpse of the correspondence between our DAGs and the hypothetical LCFTs for general $N$ in advance (Section \ref{subsection: glimpse}).  
This paper is only the first step in the quest, but I believe there is a fruitful world in this direction.

\noindent\textbf{Acknowledgements:}
\noindent
First, I would like to express my greatest gratitude to \textit{Sergei Gukov} for his patient interest in the idea of this paper from two years ago and his belief in my potential.
I also thank \textit{Tomoyuki Arakawa}, \textit{Naoki Genra} and \textit{Nicolai Reshetikhin} for their interest and warm encouragement.
Finally, I would like to thank \textit{Hikami Kazuhiro} and \textit{Toshiki Matsusaka}.
We spent less than half a year at Kyushu University, but this work might not have been written without the motivation to understand, express, and explore in my own way the fertile and mysterious $q$-series world that you both gave me a glimpse of.
I am supported by Caltech-Tsinghua joint postdoc Fellowships and by Beijing Natural Science Foundation grant IS24011.

\section{A recursive combinatorial derivation  of $\hat{Z}$-invariants}
\label{section: combinatorics}
\subsection{Directed acyclic graphs}
Note that we will introduce a variety of terms and concepts, some of which are not common and were coined by the author for this paper.
A \textit{directed acyclic graph (DAG)} is a directed graph with no directed cycle.
We use the notation $\operatorname{Node}(\mathcal{Q})$ and $\operatorname{Edge}(\mathcal{Q})\subseteq\operatorname{Node}(\mathcal{Q})\times\operatorname{Node}(\mathcal{Q})$ for the set of nodes and edges of a DAG $\mathcal{Q}$, respectively.
Note that the reachability relation of $\mathcal{Q}$ defines a partial order on $\operatorname{Node}(\mathcal{Q})$.
For $v\in\operatorname{Node}(\mathcal{Q})$, we often write $v\in\mathcal{Q}$ instead of $v\in\operatorname{Node}(\mathcal{Q})$.
For a DAG $\mathcal{Q}$, we sometimes use the symbols ${}_+\mathcal{Q}$ and ${}_-\mathcal{Q}$ for the original DAG $\mathcal{Q}$ and the DAG obtained by reversing the directions of all edges of $\mathcal{Q}$, respectively.
Also, if we can use both $\mathcal{Q}={}_+\mathcal{Q}$ and ${}_{-}\mathcal{Q}$ in some discussion, the symbol ${}_{\pm}\mathcal{Q}$ or ${}_{\mp}\mathcal{Q}$ is used for convenience.
For DAGs $\mathcal{Q},\mathcal{Q}'$ we call $\mathcal{Q}$ is a \textit{subgraph} (resp. \textit{induced subgraph}) of $\mathcal{Q}'$ if $\operatorname{Node}(\mathcal{Q})\subseteq\operatorname{Node}(\mathcal{Q}')$ and $\operatorname{Edge}(\mathcal{Q})\subseteq\operatorname{Edge}(\mathcal{Q}')$ (resp. and in addition, $\operatorname{Edge}(\mathcal{Q'})=\operatorname{Edge}(\mathcal{Q})\cap(\operatorname{Node}(\mathcal{Q}')\times \operatorname{Node}(\mathcal{Q}'))$).
For a DAG $\mathcal{Q}$ and a set $C$, we call $\mathcal{Q}$ is \textit{$C$-colored}  if we have a coloring map $c\colon\operatorname{Node}(\mathcal{Q})\rightarrow C$.
A $C$-colored DAG $\mathcal{Q}$ is \textit{$C$-labeled} if the coloring map is injective.
For simplicity, we often omit these maps and simply say as ``$v$ is labeled/colored by $x$", or  include the label in the symbol representing $v$ (e.g., $v=:v(x)$ for the inverse image of  $x\in C$), or more simply, regard $x$ itself as the corresponding node.
For $C$-colored DAGs $\mathcal{Q}_1$ and $\mathcal{Q}_2$, denote  $\mathcal{Q}_1\simeq\mathcal{Q}_2$ if 
\begin{align*}
((v,w)\in\operatorname{Edge}(\mathcal{Q}_1)\iff(f(v),f(w))\in\operatorname{Edge}(\mathcal{Q}_2))
\ 
\wedge
\ 
(c_1(v)=c_1(w)
\Longrightarrow
c_2(f(v))=c_2(f(w)))
\end{align*}
for $v,w\in\mathcal{Q}_1$ and the coloring maps $c_i\colon\mathcal{Q}_i\rightarrow C$ ($i=1,2$).
If not only $\mathcal{Q}_1\simeq\mathcal{Q}_2$ but also $c_1=c_2\circ f$, then we write as $\mathcal{Q}_1\cong\mathcal{Q}_2$.
For DAGs $\mathcal{Q}_1,\mathcal{Q}_2$, the \textit{Cartesian product} (or \textit{box product})
$\mathcal{Q}_1\square\mathcal{Q}_2$ is defined by
\begin{align}
\label{Cartesian product of DAGs}
\begin{aligned}
&\operatorname{Node}(\mathcal{Q}_1\square\mathcal{Q}_2)
=
\operatorname{Node}(\mathcal{Q}_1)
\times
\operatorname{Node}(\mathcal{Q}_2)
=
\{
(v_1,v_2)
\ |\ 
v_1\in\mathcal{Q}_1,
\ 
v_2\in\mathcal{Q}_2
\},
\\
&\operatorname{Edge}(\mathcal{Q}_1\square\mathcal{Q}_2)
=\left\{
((v_1,v_2),(v'_1,v'_2))
\ \middle| \ 
\begin{aligned}
&(v_1=v'_1\ \land\ (v_2,v_2')\in\operatorname{Edge}(\mathcal{Q}_2))
\\
\lor
&(v_2=v'_2\ \land\ (v_1, v_1')\in\operatorname{Edge}(\mathcal{Q}_1))
\end{aligned}
\right\}.
\end{aligned}
\end{align}

In this paper, all DAGs are  
colored (but in general, not labeled) by 
\begin{align*}
C_m:=\Bits{m}\times\Z
\end{align*}
for some $m\in\Z_{\geq 0}$ (and in most cases, the second term is $\Z_{\geq 0}$).
When a node $v$ is colored by $(b,d)\in C_m$, we call $b$ and $d$ the \textit{bits} (i.e., \textit{binary digits}) and \textit{vertical position} (or \textit{depth}) of $v$, and sometimes denote $b(v)$ and $d(v)$,
respectively (and denote $c(v):=(b(v),d(v))$).
We will later also consider the disconnected union $\mathcal{Q}=\coprod_{h}\mathcal{Q}^h$ such that each $\mathcal{Q}^h$ has the parameter called \textit{horizontal position} (or \textit{Cartan weight}) $h$.
We sometimes use the notation $h(v)=h$ for $v\in\mathcal{Q}^h$, but this parameter is treated as a component of the label, not a color.
For a DAG $\mathcal{Q}$ and $h\in\Z$, $(b,d)\in C_m$, denote 
$\mathcal{Q}^h$
(resp. $\mathcal{Q}_b$, $\mathcal{Q}_d$, $\mathcal{Q}_{b,d}$) the induced subgraphs of $\mathcal{Q}$ with horizontal position $h$ (resp. bits $b$, depth $d$, and $\mathcal{Q}_b\cap\mathcal{Q}_d$).
For $h\in\Z$ and $d\in\Z$, denote $\mathcal{Q}^{[h]}$ and $\mathcal{Q}_{[d]}$ the \textit{horizontal $h$-shift} and the \textit{vertical $d$-shift} of $\mathcal{Q}$, i.e. they are isomorphic to $\mathcal{Q}$ by the identity map, but define
$\mathcal{Q}^{h'}\simeq(\mathcal{Q}^{[h]})^{h+h'}=:\mathcal{Q}^{[h]+h'}$ and $\mathcal{Q}_{d'}\simeq(\mathcal{Q}_{[d]})_{d+d'}=:\mathcal{Q}_{[d]+d'}$
by restriction, respectively.
More generally, for $I\subseteq\Z$, define
$\mathcal{Q}^{[I]}:=\sqcup_{h\in I}\mathcal{Q}^{[h]}$ and $\mathcal{Q}_{[I]}:=\sqcup_{d\in I}\mathcal{Q}_{[d]}$, respectively.
For $\nu\in\Bits{m}$, denote $\mathcal{Q}_{[\nu]}$ the DAG such that $\mathcal{Q}\simeq\mathcal{Q}_{[\nu]}$ by the identity map but the coloring $(b(v),d(v))$ is changed to $(b(v)\ast\nu,d(v)+|\nu|_{-})$ for any $v\in\mathcal{Q}$, where $|\nu|_{-}:=|\{i\ |\ \nu_i=-\}|$. 
By abuse of notation, for $I\subseteq\Z$ and $\nu\in\Bits{m}$, we sometimes write $\mathcal{Q}_{[\nu+I]}$ for $(\mathcal{Q}_{[\nu]})_{[I]}=(\mathcal{Q}_{[I]})_{[\nu]}$.

\subsection{Fragmentation}
\label{subsection: hypercubization}
Let us introduce some terms related to bits.
For $m\in \Z_{\geq 0}$, set
$\Bits{m}=\{+,-\}^{m}$.
For $\lambda\in\{+,-,\pm\}^m$, we write 
$\lambda=\lambda_1\cdots\lambda_m$.
Also, $\lambda_I=\lambda_{i_1\dots i_n}:=\lambda_{i_1}\cdots\lambda_{i_n}\in\Bits{I}$ for $I=(i_1,\dots,i_n)$.
For $x\in\{+,-,\pm\}$, $x^i:=x\cdots x$ of length $i$.
We sometimes regard an element of $\{+,-,\pm\}^m$ as a subset in $\Bits{m}$ by
\begin{align}
\label{regard qbits as subsets}
\{+,-,\pm\}^m
\ni
\hat{\delta}
\mapsto
\{\epsilon
\ |\ 
\hat{\lambda}_i\in\{+,-\}
\Rightarrow
\mu_i=\hat{\lambda}_i
\}
\subseteq
\Bits{m}.
\end{align}
We identify $\Bits{m}$ with the direct product of $m$-copies of the symmetric group $\mathfrak{S}_2$ by 
\begin{align*}
+\mapsto\operatorname{id},
\ 
-\mapsto\sigma_1,
\ \ 
\lambda\ast\lambda'
=
(\lambda\ast\lambda')_1\cdots(\lambda\ast\lambda')_m
:=
(\lambda_1\ast\lambda_1')\cdots(\lambda_m\ast\lambda_m')\in\Bits{m},
\end{align*}
where $\ast$ in the RHS is the product in $\mathfrak{S}_2$.
For $\lambda\in\Bits{m}$, denote $|\lambda|_-:=|\{i\ |\ \lambda_i=-\}|$.
Note that for $\lambda,\mu\in\Bits{m}$,
\begin{align}
\label{inequarity used in tildeV}
|\lambda|_{-}+|\mu|_{-}-|\lambda\ast\mu|_{-}
=2|\{i\ |\ (\lambda_i,\mu_i)=(-,-)\}|\geq 0.
\end{align}
Also, if $\lambda\in\Bits{1}$, then $\lambda 1\in\{\pm1\}$.
For $\lambda,\mu,\mu'\in\Bits{m}$, the \textit{partial order} $\leq_\lambda$ on $\Bits{m}$ is defined by
\begin{align}
\label{partial ordering to define the quantization}
\mu\leq_\lambda\mu'
\iff
\forall i,\ 
\operatorname{gap}(\lambda_i\ast\mu_i)
\leq
\operatorname{gap}(\lambda_i\ast\mu_i').
\iff
\forall i, \ 
((\lambda\ast\mu)_i=+\Rightarrow(\lambda\ast\mu')_i=+).
\end{align}
In particular, $\lambda$ is the maximum element with respect to $\leq_\lambda$.
When $\lambda=+^m$, we often write as $\leq$  instead of $\leq_{+^m}$.
For $\mu\leq_\lambda\mu'$, we also use the letter 
\begin{align*}
|\mu'-_{\lambda}\mu|
:=
\tfrac{1}{2}\sum_{1\leq i\leq m}
(\lambda_i\ast\mu'_i)1
-
(\lambda_i\ast\mu_i)1
=
\{
i\ |\ (\lambda\ast\mu)_i=-\ \land\ (\lambda\ast\mu')_i=+
\}
\in\Z_{\geq 0}.
\end{align*}
In particular, $|\mu-_{\mu}\lambda|=|\lambda\ast\mu|_{-}$.
Finally, for $D\subseteq\Bits{m}$, denote $\bar{D}:=\{\bar\lambda:=-^m\ast\lambda\ |\ \lambda\in D\}$.

The \textit{$m$-cube DAG} is a DAG formed from the nodes and edges of an $m$-dimensional hypercube. 
The following special hypercube DAGs play a fundamental role in this paper.
\begin{definition}
\label{definition: hypercubes}
For $m\in\Z_{\geq 0}$, the $C_m$-colored (but not labeled) $2m$-cube DAG $(\pm^m|\pm^m)$ is defined by
\begin{align}
\label{definition: 2m-DAHG}
\begin{aligned}
&\operatorname{Node}((\pm^m|\pm^m))
=\{
(\lambda|\mu)
\ |\ 
(\lambda,\mu)\in \Bits{m}\times\Bits{m}=\Bits{2m}
\},\\
&\operatorname{Edge}((\pm^m|\pm^m))
=
\left\{
((\lambda|\mu),(\lambda'|\mu'))
\ \middle|\ 
\begin{aligned}
&(|\lambda'-_{+^m}\lambda|=1
\ \land\ 
\mu=\mu')
\\
\lor
&(|\mu-_{+^m}\mu'|=1
\ \land\ 
\lambda=\lambda')
\end{aligned}
\right\}
\\
&b(\lambda|\mu)
:=\lambda\ast\mu,
\ \ 
d(\lambda|\mu)
:=|\lambda\mu|_{-}.
\end{aligned}
\end{align}
More generally, for 
$D,E\subseteq\Bits{m}$
define the induced subgraph $(D|E)$ of $(\pm^m|\pm^m)$ by 
\begin{align*}
\operatorname{Node}(D|E)
=
\{
(\lambda|\mu)
\ |\ 
\lambda\in D,\ \mu\in E
\}.
\end{align*}
This notation is compatible with that of nodes above.
We use this notation with the correspondence \eqref{regard qbits as subsets}.
In particular, for $\lambda,\nu\in\Bits{m}$ and a $C_a$-colored DAG $\mathcal{Q}$, we call $\mathcal{Q}\square(\lambda|\pm^m)_{[\nu]}$ and $\mathcal{Q}\square(\pm^m|\bar\lambda)_{[\nu]}$ the \textit{left $\lambda\ast\nu$-segment} 
and the \textit{right $\overline{\lambda\ast\nu}$-segment} of $\mathcal{Q}\square(\pm^m|\pm^m)_{[\nu]}$, respectively. 
Also we have ${}_{\pm}(D|E)\cong{}_{\mp}(E|D)$, but not equal.
\end{definition}
By definition of $b(\lambda|\mu)$ in \eqref{definition: 2m-DAHG},
$(\lambda|\pm^m)_{[\nu]}$ and $(\pm^m|\lambda)_{[\nu]}$ are the \textit{$C_m$-labeled $m$-cube DAGs}.
Note that
\begin{align}
\label{the relation between standard case and epsilon twisted case}
\begin{aligned}
&(\lambda|\pm^m)_{[\nu]}
\cong
(\lambda\ast\nu|\pm^m)_{[|\lambda|_{-}+|\nu|_{-}-|\lambda\ast\nu|_{-}]}
\overset{\eqref{inequarity used in tildeV}}{=}
(\lambda\ast\nu|\pm^m)_{2|(\lambda_i,\nu_i)=(-,-)|},
\\
&(\pm^m|\bar\lambda)_{[\nu]}
\cong
(\pm^m|\overline{\lambda\ast\nu})_{[-|\lambda|_{-}+|\nu|_{-}+|\lambda\ast\nu|_{-}]}
\overset{\eqref{inequarity used in tildeV}}{=}
(\pm^m|\overline{\lambda\ast\nu})_{2|(\lambda_i,\nu_i)=(+,-)|}.
\end{aligned}
\end{align}
In particular, $(+^m|\pm^m)_{[\lambda]}=(\lambda|\pm^m)$ and $(\pm^m|-^m)_{[\lambda]}
=(\pm^m|\bar\lambda)_{2|\lambda|_{-}}$, respectively.
Hypercube graphs are one of the simplest of the \textit{``recursive" (or ``nested") structures}, i.e. the $(m+1)$-cube is obtained by shifting the $m$-cube. 
More generally, the following lemma holds (the proof is straightforward).
\begin{lemma}
\label{Lemma: Cartesian product of left/right hypercubes}
For $\lambda,\mu\in\Bits{m}$, let $\hat{\lambda},\hat{\lambda}',\hat{\mu},\hat{\mu}'\in\{+,-,\pm\}^{m}$ be elements such that for each $1\leq i\leq m$, 
\begin{align*}
\lambda_i=+\iff\hat{\lambda}_i=\hat{\lambda}'_i=+,
\ \ 
\mu_i=-\iff\hat{\mu}_i=\hat{\mu}'_i=-,
\ \ 
\hat{\lambda}_i=\pm
\Rightarrow
\hat{\lambda}'_i\not=\pm,
\ \ 
\hat{\mu}_i=\pm
\Rightarrow
\hat{\mu}'_i\not=\pm,
\end{align*}
Then we have the isomorphism
\begin{align*}
(\hat{\lambda}|\hat{\mu})\square(\hat{\lambda}'|\hat{\mu}')
\simeq 
(\hat{\lambda}'|\hat{\mu}')
\square
(\hat{\lambda}|\hat{\mu})
\simeq
(\hat{\lambda}\cup\hat{\lambda}'|\hat{\mu}\cup\hat{\mu}')_{[d(\lambda,\mu)]},
\end{align*}
where $\hat{\lambda}\cup\hat{\lambda}'$ and $\hat{\mu}\cup\hat{\mu}'$ defines the corresponding elements in $\{+,-,\pm\}^{m+m'}$.
In other words, for $m_1=|\{i|\hat\lambda_i=\pm\}|$ and $m_2=|\{i|\hat\mu_i=\pm\}|$, the two independent $m_1$- and $m_2$-DAHGs $(\hat{\lambda}|\mu)$ and $(\lambda|\hat\mu)$, which share the starting node $(\lambda|\mu)$, span the (vertical $[d(\lambda|\mu)]$-shifted) $(m_1+m_2)$-DAHG $(\hat\lambda|\hat\mu)$ in $(\pm^m|\pm^m)$ by the Cartesian product.
In particular, by identifying $(\lambda|\pm^m)$ and $(\lambda'|\pm^{m'})$ with $(\lambda\lambda'|\pm^m+^{m'})_{[-|\lambda'|_-]}$ and $(\lambda\lambda'|+^m\pm^{m'})_{[-|\lambda|_-]}$, respectively, we have
$(\lambda|\pm^m)\square(\lambda'|\pm^{m'})
\simeq
(\lambda\lambda'|\pm^{m+m'})$.
More generally, for $(\nu,\nu')\in\Bits{m}\times\Bits{m'}$, we have $(\lambda|\pm^m)_{[\nu]}\square(\lambda'|\pm^{m'})_{[\nu']}
\simeq
(\lambda\lambda'|\pm^{m+m'})_{[\nu\nu']}$,
where note that $\nu\nu'=(\nu +^{m'})\ast(+^m\nu')$.
\end{lemma}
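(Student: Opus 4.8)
The plan is to produce one explicit bijection on node sets and to verify that it is a colored graph isomorphism up to the stated vertical shift. The first isomorphism $(\hat\lambda|\hat\mu)\square(\hat\lambda'|\hat\mu')\simeq(\hat\lambda'|\hat\mu')\square(\hat\lambda|\hat\mu)$ is immediate from the symmetry of \eqref{Cartesian product of DAGs} under interchanging the two factors, so all the content sits in the second isomorphism. I would define a map $\phi$ on a node $((\alpha|\beta),(\alpha'|\beta'))$ of $(\hat\lambda|\hat\mu)\square(\hat\lambda'|\hat\mu')$ by reading off, coordinate by coordinate, the $\lambda$-slot from $\alpha$ where $\hat\lambda_i=\pm$, from $\alpha'$ where $\hat\lambda'_i=\pm$, and the common fixed value otherwise, and likewise the $\mu$-slot from $\beta,\beta'$ via $\hat\mu,\hat\mu'$. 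The hypotheses that at most one of $\hat\lambda_i,\hat\lambda'_i$ (resp. $\hat\mu_i,\hat\mu'_i$) is $\pm$, together with $\lambda_i=+\iff\hat\lambda_i=\hat\lambda'_i=+$ (resp. $\mu_i=-\iff\hat\mu_i=\hat\mu'_i=-$), are exactly what is needed for $\phi$ to be well defined and a bijection onto the node set of $(\hat\lambda\cup\hat\lambda'|\hat\mu\cup\hat\mu')$: the free directions of the two factors occupy disjoint coordinate sets, so overlaying the corners loses no information and the numbers of free directions add.

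The next step is to check that $\phi$ and $\phi^{-1}$ send edges to edges. By \eqref{Cartesian product of DAGs} an edge of the product moves along one cube edge in exactly one factor, and by the edge rule in \eqref{definition: 2m-DAHG} such a move is the flip of a single free coordinate of that factor (a $-$ to $+$ among the $\lambda$-coordinates, or a $+$ to $-$ among the $\mu$-coordinates); since the free coordinates of the two factors are disjoint, $\phi$ carries this to the corresponding single-coordinate flip, which is an edge of the induced subgraph $(\hat\lambda\cup\hat\lambda'|\hat\mu\cup\hat\mu')$ of $(\pm^m|\pm^m)$. Conversely, any edge of $(\hat\lambda\cup\hat\lambda'|\hat\mu\cup\hat\mu')$ flips a coordinate that is free in precisely one factor, hence is the $\phi$-image of an edge of the product. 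This uses nothing beyond the disjointness of free coordinates and \eqref{definition: 2m-DAHG}.

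It then remains to track the coloring, where the only issue is the vertical shift. The depth of a node of $(\pm^m|\pm^m)$ is the total number of $-$'s among its $2m$ coordinates; on the product side, taking the depth of a pair to be the sum of the depths of its components, the $-$'s in the free coordinates of the two factors are each counted once, whereas the $-$'s in the fixed coordinates are counted twice, once per factor, while on the $(\hat\lambda\cup\hat\lambda'|\hat\mu\cup\hat\mu')$ side those fixed $-$'s are counted once. Hence the product side is uniformly deeper, by the number of $-$'s in the fixed coordinates, which is exactly the integer $d(\lambda|\mu)=|\lambda|_-+|\mu|_-$; this is the shift $[d(\lambda,\mu)]$ appearing in the statement. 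With the matching convention for how the bit of a product node is built from those of its components, equal colors are preserved in both directions, so $\simeq$ holds.

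Finally, the ``In other words'' reformulation is the special case in which the first factor has no free $\mu$-coordinate and the second no free $\lambda$-coordinate, so that $(\hat\lambda|\hat\mu)$ and $(\hat\lambda'|\hat\mu')$ degenerate to $(\hat\lambda|\mu)$ and $(\lambda|\hat\mu)$ meeting at the common corner $(\lambda|\mu)$; and the two ``In particular'' identities follow by iterating the main isomorphism after the stated identifications of $(\lambda|\pm^m)$ and $(\lambda'|\pm^{m'})$ with vertically shifted induced subgraphs of $(\pm^{m+m'}|\pm^{m+m'})$, checking that the shifts $-|\lambda'|_-$ and $-|\lambda|_-$ exactly cancel the shift $d(\lambda\lambda'|+^{m+m'})=|\lambda|_-+|\lambda'|_-$ produced by the lemma, and the $[\nu],[\nu']$-twisted version follows by the same bookkeeping together with the elementary identity $\mathcal{Q}_{[\nu]}\square\mathcal{Q}'_{[\nu']}\simeq(\mathcal{Q}\square\mathcal{Q}')_{[\nu\nu']}$ (note $\nu\nu'=(\nu+^{m'})\ast(+^m\nu')$ and $|\nu\nu'|_-=|\nu|_-+|\nu'|_-$). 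I expect the only part requiring real care to be this last round of shift-and-twist bookkeeping; the graph-theoretic core is the one-line observation that two hypercubes spanning disjoint sets of free coordinates compose under $\square$ to a single hypercube.
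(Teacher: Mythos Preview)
Your argument is correct and is exactly the kind of direct verification the paper has in mind: the paper states only that ``the proof is straightforward'' and gives no further details, so your explicit bijection $\phi$, the edge check via disjointness of free coordinates, and the depth/shift bookkeeping simply spell out that omitted straightforward argument. There is nothing to compare against beyond that one-line remark.
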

In the following, if there is no risk of confusion, we will write $\mathcal{Q}(D|E)_{[\nu]}$ instead of $\mathcal{Q}\square(D|E)_{[\nu]}$ and use the above identification.
For convenience, let us introduce the following concept necessary to our discussion.
\begin{definition}
\label{definition: slim}
For $a\in\Z_{\geq 0}$ and a $C_a$-colored DAG $\mathcal{Q}$, we call $\mathcal{Q}$ is \textit{slim} if $\mathcal{Q}$ is 
$C_a$-labeled and satisfies
\begin{align}
\label{positive even vertical shiftable}
(b,d)\in\mathcal{Q}
\Rightarrow
(b,d+2)\in\mathcal{Q}.
\end{align}
We call $\mathcal{Q}$ is \textit{fat} if it is not slim.
For example, if $\mathcal{Q}$ has the form $\mathcal{Q}'(\pm^m|\pm^m)_{[\nu]}$ for some $m\geq 1$, then $\mathcal{Q}$ is fat.
\end{definition}
Let $\mathcal{Q}$ be a slim DAG.
For $m\in\Z_{\geq 0}$ and $\lambda,\nu\in\Bits{m}$, the Cartesian product $\mathcal{Q}(\lambda|\pm^m)_{[\nu]}$ is labeled by
\begin{align*}
(bb(\lambda|\mu)_{[\nu]},d+d(\lambda|\mu)_{[\nu]})
:=
((b,d),(\lambda|\mu)_{[\nu]})
\in
C_{a+m}
\ \ 
(b\in\Bits{a},\ \mu\in\Bits{m}).
\end{align*}
On the other hand, since
\begin{align*}
(bb(\lambda|\mu)_{[\nu]},d+d(\lambda|\mu)_{[\nu]})
=
(bb(\bar\mu|\bar\lambda)_{[\nu]},
(d+2d(\lambda|\mu)_{[\nu]})
+
(d(\bar\mu|\bar\lambda)_{[\nu]}-2m)),
\end{align*}
we have the inclusion map
$\operatorname{Node}(\mathcal{Q}(\lambda|\pm^m))
\hookrightarrow
\operatorname{Node}
(\mathcal{Q}(\pm^m|\bar\lambda)_{[\nu-2m]})$ defined by
\begin{align}
\label{identification of left-right labelings}
\mathcal{Q}(\lambda|\pm^m)_{[\nu]}
\ni
((b,d),
(\lambda|\mu)_{[\nu]})
=
((b,d+2(\lambda|\mu)_{[\nu]}),
(\bar\mu|\bar\lambda)_{[\nu-2m]})
\in
\mathcal{Q}((\pm^m|\bar\lambda)_{[\nu-2m]}),
\end{align}
under the similar labeling of $\mathcal{Q}((\pm^m|\bar\lambda)_{[\nu-2m]})$
(note that we use the condition \eqref{positive even vertical shiftable} here).
In the same manner, we also have the inclusion $\operatorname{Node}
(\mathcal{Q}(\pm^m|\bar\lambda)_{[\nu]})
\hookrightarrow
\operatorname{Node}
((\mathcal{Q}(\lambda|\pm^m)_{[\nu-2m]}))$ 
by
\begin{align}
\label{identification of right-left labelings}
\mathcal{Q}(\pm^m|\bar\lambda)_{[\nu]}
\ni
((b,d),
(\bar\mu,\bar\lambda)_{[\nu]}
=
((b,d+2(\bar\mu|\bar\lambda)_{[\nu]}),
((\lambda|\mu)_{[\nu-2m]}))
\in
\mathcal{Q}((\lambda|\pm^m)_{[\nu-2m]}).
\end{align}

\begin{definition}
\label{definition: left/right delta cubizations}
Let $m\in\Z_{\geq 0}$, $\lambda,\nu\in\Bits{m}$, and $\mathcal{Q}$ be a slim DAG.
\begin{enumerate}
\item 
The \textit{left $(\lambda\ast\nu)$-fragment $\mathcal{Q}[\lambda|\pm^m)_{[\nu]}$ of $\mathcal{Q}(\pm^m|\pm^m)_{[\nu]}$}
is defined by
\begin{align*}
&\operatorname{Node}(\mathcal{Q}[\lambda|\pm^m)_{[\nu]})
:=
\operatorname{Node}(\mathcal{Q}(\lambda|\pm^m)_{[\nu]})
\underset{\eqref{identification of left-right labelings}}{\hookrightarrow}
\operatorname{Node}(\mathcal{Q}(\pm^m|\bar\lambda)_{[\nu-2m]}),\\
&(v,v')
\in
\operatorname{Edge}(\mathcal{Q}[\lambda|\pm^m)_{[\nu]})
\iff
(v,v')
\in
\operatorname{Edge}(\mathcal{Q}(\lambda|\pm^m)_{[\nu]})
\cup
\operatorname{Edge}
(\mathcal{Q}(\pm^m|\bar\lambda)_{[\nu-2m]}).
\end{align*}
\item 
The \textit{right $(\overline{\lambda\ast\nu})$-fragment $\mathcal{Q}(\pm^m|\bar\lambda]_{[\nu]}$ of $\mathcal{Q}(\pm^m|\pm^m)_{[\nu]}$} is defined by
\begin{align*}
&\operatorname{Node}(\mathcal{Q}(\pm^m|\bar\lambda])_{[\nu]})
:=
\operatorname{Node}(\mathcal{Q}(\pm^m|\bar\lambda)_{[\nu]})
\underset{\eqref{identification of right-left labelings}}{\hookrightarrow}
\operatorname{Node}(\mathcal{Q}(\lambda|\pm^m)_{[\nu-2m]}),\\
&(v,v')
\in
\operatorname{Edge}(\mathcal{Q}(\pm^m|\bar\lambda])_{[\nu]}
\iff
(v,v')
\in
\operatorname{Edge}(\mathcal{Q}(\pm^m|\bar\lambda)_{[\nu]})
\cup
\operatorname{Edge}
(\mathcal{Q}(\lambda|\pm^m)_{[\nu-2m]}).
\end{align*}
\end{enumerate}
We also use the well-defined notation $\mathcal{Q}{}_{\pm}[\lambda|\pm^m)\simeq\mathcal{Q}{}_{\mp}(\pm^m|\lambda]$.
Note that by \eqref{the relation between standard case and epsilon twisted case}, we have
\begin{align*}
\mathcal{Q}[\lambda|\pm^m)_{[\nu]}
\cong
\mathcal{Q}[\lambda\ast\nu|\pm^m)_{[|\lambda|_{-}+|\nu|_{-}-|\lambda\ast\nu|_{-}]}
\ \ 
\text{(resp. $\mathcal{Q}(\pm^m|\bar\lambda]_{[\nu]}
\cong
\mathcal{Q}(\pm^m|\overline{\lambda\ast\nu}]_{[-|\lambda|_{-}+|\nu|_{-}+|\lambda\ast\nu|_{-}]}$)}.
\end{align*}
For a (not necessary slim) subgraph $\mathcal{Q'}\subseteq\mathcal{Q}$ and $E\subseteq\Bits{m}$,
denote
$\mathcal{Q}'[\lambda|E)_{[\nu]}$ the induced subgraph of $\mathcal{Q}[\lambda|\pm^m)_{[\nu]}$ defined by
$\operatorname{Node}(\mathcal{Q}'[\lambda|E)_{[\nu]}
)
=
\operatorname{Node}(\mathcal{Q}'\square(\lambda|E)_{[\nu]})$
($\mathcal{Q}'(E|\bar\lambda]_{[\nu]}\subseteq\mathcal{Q}(\pm^m|\bar\lambda]_{[\nu]}$ is defined in the same manner).
\end{definition}
\begin{remark}
\label{remark: segment and fragment}
The \textit{$(\lambda\ast\nu)$-segments} $\mathcal{Q}(\lambda|\pm^m)_{[\nu]}$ are subgraphs of $\mathcal{Q}(\pm^m|\pm^m)_{[\nu]}$, wheareas the \textit{left $(\lambda\ast\nu)$-fragments} $\mathcal{Q}[\lambda|\pm^m)_{[\nu]}$ are \textit{not}, because edges are not included (the cases $\mathcal{Q}(\pm^m|\bar\lambda)_{[\nu]}$ and $\mathcal{Q}(\pm^m|\bar\lambda]_{[\nu]}$ are the same).
This is why we distinguish between the slightly different terms ``segment" and ``fragment", or between $(\ |\ )$ and $[\ |\ ),(\ |\ ]$.
Put another way, $(\ |\ )$ and $[\ |\ ),(\ |\ ]$ are compatible in $\operatorname{Node}(\ )$, but not at the graph level.
\end{remark}

The following \textit{composition} of slim cubizations follows from that of hypercubes in Lemma \ref{Lemma: Cartesian product of left/right hypercubes}.
\begin{lemma}
\label{Lemma: associativity of lert/right cubizations}
For $m,m'\in\Z_{\geq 0}$, $(\lambda,\lambda'), (\nu,\nu')\in\Bits{m}\times\Bits{m'}$, 
and
a slim DAG $\mathcal{Q}$,
we have
\begin{align*}
\mathcal{Q}[\lambda|\pm^m)_{[\nu]}[\lambda'|\pm^{m'})_{[\nu']}
\cong
\mathcal{Q}[\lambda\lambda'|\pm^{m+m'})_{[\nu\nu']},
\ \ 
\mathcal{Q}(\pm^m|\lambda]_{[\nu]}(\pm^{m'}|\lambda']_{[\nu']}
\cong
\mathcal{Q}(\pm^{m+m'}|\lambda\lambda']_{[\nu\nu']}.
\end{align*}
\end{lemma}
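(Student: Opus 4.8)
The plan is to derive both isomorphisms from the hypercube composition law of Lemma~\ref{Lemma: Cartesian product of left/right hypercubes}, together with the associativity of the Cartesian product $\square$ and its explicit edge formula \eqref{Cartesian product of DAGs}, by unwinding Definition~\ref{definition: left/right delta cubizations}. Since reversing all edge directions interchanges $\mathcal{Q}{}_{\pm}[\lambda|\pm^m)$ with $\mathcal{Q}{}_{\mp}(\pm^m|\lambda]$ and swaps the identifications \eqref{identification of left-right labelings}, \eqref{identification of right-left labelings}, it suffices to prove the first isomorphism, for left fragments; the second then follows formally.

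\textbf{Node sets and colorings.} By Remark~\ref{remark: segment and fragment}, a fragment and the corresponding segment share the same underlying (colored) set of nodes, so the node set of $\mathcal{Q}[\lambda|\pm^m)_{[\nu]}[\lambda'|\pm^{m'})_{[\nu']}$ is $\operatorname{Node}(\mathcal{Q}(\lambda|\pm^m)_{[\nu]}\square(\lambda'|\pm^{m'})_{[\nu']})$, which by Lemma~\ref{Lemma: Cartesian product of left/right hypercubes} equals $\operatorname{Node}(\mathcal{Q}(\lambda\lambda'|\pm^{m+m'})_{[\nu\nu']})=\operatorname{Node}(\mathcal{Q}[\lambda\lambda'|\pm^{m+m'})_{[\nu\nu']})$. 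One then checks that the colorings agree and that applying the identification \eqref{identification of left-right labelings} first for $(\lambda,\nu)$ and then for $(\lambda',\nu')$ composes to the single identification \eqref{identification of left-right labelings} for $(\lambda\lambda',\nu\nu')$; concretely this comes down to $b(\lambda\lambda'|\mu\mu')=b(\lambda|\mu)b(\lambda'|\mu')$, $d(\lambda\lambda'|\mu\mu')=d(\lambda|\mu)+d(\lambda'|\mu')$, and $2(m+m')=2m+2m'$, so the vertical shifts $[\nu]$, $[\nu'-2m']$, $[\nu\nu'-2(m+m')]$ occurring at the two levels of Definition~\ref{definition: left/right delta cubizations} line up (normalizing the $[\nu]$'s via \eqref{the relation between standard case and epsilon twisted case} if desired).

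\textbf{Edge sets.} Unfolding the outer fragment via Definition~\ref{definition: left/right delta cubizations}, $\operatorname{Edge}(\mathcal{Q}[\lambda|\pm^m)_{[\nu]}[\lambda'|\pm^{m'})_{[\nu']})$ is the union of $\operatorname{Edge}(\mathcal{Q}[\lambda|\pm^m)_{[\nu]}\square(\lambda'|\pm^{m'})_{[\nu']})$ and $\operatorname{Edge}(\mathcal{Q}[\lambda|\pm^m)_{[\nu]}\square(\pm^{m'}|\bar{\lambda}')_{[\nu'-2m']})$; unfolding the inner fragment gives $\operatorname{Edge}(\mathcal{Q}[\lambda|\pm^m)_{[\nu]})=\operatorname{Edge}(\mathcal{Q}(\lambda|\pm^m)_{[\nu]})\cup\operatorname{Edge}(\mathcal{Q}(\pm^m|\bar{\lambda})_{[\nu-2m]})$. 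Substituting into \eqref{Cartesian product of DAGs} sorts every edge on the left into six families indexed by (inner-left vs.\ inner-right segment) $\times$ (the moving coordinate lies in $\mathcal{Q}$, the first cube factor, or the second cube factor). The key point is that the inner identification fixes the $m'$-cube coordinate, so the families collapse to: (i) edges inside $\mathcal{Q}(\lambda|\pm^m)_{[\nu]}$ with the $m'$-coordinate fixed; (ii) edges inside $\mathcal{Q}(\pm^m|\bar{\lambda})_{[\nu-2m]}$ with the $m'$-coordinate fixed; (iii) edges inside $(\lambda'|\pm^{m'})_{[\nu']}$; (iv) edges inside $(\pm^{m'}|\bar{\lambda}')_{[\nu'-2m']}$. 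By Lemma~\ref{Lemma: Cartesian product of left/right hypercubes} (left form) and associativity of $\square$, (i)+(iii) is precisely $\operatorname{Edge}(\mathcal{Q}(\lambda\lambda'|\pm^{m+m'})_{[\nu\nu']})$; by Lemma~\ref{Lemma: Cartesian product of left/right hypercubes} (right form, carrying its $[d(\lambda,\mu)]$-shift) and associativity, (ii)+(iv) is $\operatorname{Edge}(\mathcal{Q}(\pm^{m+m'}|\overline{\lambda\lambda'})_{[\nu\nu'-2(m+m')]})$. Their union is, by Definition~\ref{definition: left/right delta cubizations}, exactly $\operatorname{Edge}(\mathcal{Q}[\lambda\lambda'|\pm^{m+m'})_{[\nu\nu']})$, which with the node identification of the previous paragraph yields the asserted $\cong$.

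\textbf{Main obstacle.} The conceptual content is small; the real work is bookkeeping the vertical shifts. One must verify that composing \eqref{identification of left-right labelings} for the $m$-cube with \eqref{identification of left-right labelings} for the $m'$-cube reproduces \eqref{identification of left-right labelings} for the $(m+m')$-cube — i.e.\ that the shift amounts $-2m'$, $-2m$, $-2(m+m')$ and the twists $[\nu]$, $[\nu']$, $[\nu\nu']$ are mutually consistent — and that the $[d(\lambda,\mu)]$-shift in the right form of Lemma~\ref{Lemma: Cartesian product of left/right hypercubes} matches the shift produced by transporting the iterated left fragment. Arranging the argument so that the two groupings of the hypercube factors (first-then-second versus all-at-once) are manifestly equal is the only delicate point; everything else is formal from \eqref{Cartesian product of DAGs} and Lemma~\ref{Lemma: Cartesian product of left/right hypercubes}.
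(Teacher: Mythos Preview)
Your approach is correct and is essentially the same as the paper's: the paper simply states that this composition law ``follows from that of hypercubes in Lemma~\ref{Lemma: Cartesian product of left/right hypercubes}'' and gives no further argument. What you have written is a careful unpacking of that one-line justification, tracking node sets, edge sets, and the vertical-shift bookkeeping that the paper leaves entirely to the reader.
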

Later, we will consider the following (fat) DAG formed by aligning the left/right $\lambda$-fragments.
\begin{definition}
\label{definition: fragmented fat cubization}
Let $a,m\in\Z_{\geq 0}$, $\nu\in\Bits{m}$, and $\mathcal{Q}$ be a slim $C_a$-labeled DAG.
The \textit{left/right-fragmented $\mathcal{Q}(\pm^m|\pm^m)_{[\nu]}$} are the fat $C_{a+m}$-labeled DAGs $\mathcal{Q}[\pm^m|\pm^m)_{[\nu]}$ and $\mathcal{Q}(\pm^m|\pm^m]_{[\nu]}$ defined by
\begin{align*}
\mathcal{Q}[\pm^m|\pm^m)_{[\nu]}
:=
\sqcup_{\lambda\in\Bits{m}}
\mathcal{Q}[\lambda|\pm^m)_{[\nu]},
\ \ 
\mathcal{Q}(\pm^m|\pm^m]_{[\nu]}
:=
\sqcup_{\lambda\in\Bits{m}}
\mathcal{Q}(\pm^m|\bar{\lambda}]_{[\nu]},
\end{align*}
respectively.
As the name suggests, we have
\begin{align*}
\operatorname{Node}(\mathcal{Q}[\pm^m|\pm^m)_{[\nu]})
=
\operatorname{Node}(\mathcal{Q}(\pm^m|\pm^m]_{[\nu]})
=
\operatorname{Node}(\mathcal{Q}(\pm^{m}|\pm^{m})_{[\nu]}).
\end{align*}
\end{definition}

Let us apply all the above operations to the simplest case, i.e., set the slim $C_0$-labeled DAG
\begin{align}
\label{the simplest tower}
\mathcal{Q}=[\ |\ )=(\ |\ ],
\ \ 
\operatorname{Node}[\ |\ )
:=
\{(\phi,d)\ |\ d\in 2\Z_{\geq 0}\},
\ \ 
\operatorname{Edge}[\ |\ )
:=
\phi.
\end{align}
For $m\in\Z_{\geq 0}$ and  $\lambda,\nu\in\Bits{m}$, the slim $C_m$-labeled DAGs
\begin{align*}
[\lambda|\pm^m)_{[\nu]}
:=
[\ |\ )[\lambda|\pm^m)_{[\nu]},
\ \ 
(\pm^m|\lambda]_{[\nu]}
:=
(\ |\ ](\pm^m|\lambda]_{[\nu]}
\end{align*}
are defined
by Definition \ref{definition: left/right delta cubizations}.
By Lemma \ref{Lemma: associativity of lert/right cubizations}, this notation is well-defined, i.e. for $m'\in\Z_{\geq 0}$ and $\lambda',\nu'\in\Bits{m'}$,
\begin{align*}
[\lambda\lambda'|\pm^{m+m'})_{[\nu\nu']}
:=
[\lambda|\pm^m)_{[\nu]}
[\lambda'|\pm^{m'})_{[\nu']},
\ \ 
(\pm^{m+m'}|\lambda\lambda']_{[\nu\nu']}
:=
(\pm^m|\lambda]_{[\nu]}
(\pm^{m'}|\lambda']_{[\nu']}.
\end{align*}
For $D,E\subseteq\Bits{m}$, the DAGs $[D|E)_{[\nu]}$,  $(\bar{E}|\bar{D}]_{[\nu]}$, etc. are defined in the obvious way.

\subsection{Derivation of $\hat{Z}$-invariants}
\label{subsection: derivation of Z-invariants}
Let us define the \textit{character} $\ch\mathcal{Q}$ of a $C_m$-colored DAG $\mathcal{Q}$.
In some sense, it measures how far the $\mathcal{Q}$ is from $C_m$-labeled DAGs, or how much extra information besides $C_m$.
\begin{definition}
For $m\in\Z_{\geq 0}$ and a $C_m$-colored DAG $\mathcal{Q}$, the \textit{character} of $\mathcal{Q}$ is defined by
\begin{align*}
\ch\mathcal{Q}
:=
\sum_{(b,d)\in C_m}
|\operatorname{Node}(\mathcal{Q}_{b,d})|
x_{b,d},
\end{align*}
where $x_{b,d}$ are variants.
For $C_m$-colored DAGs $\mathcal{Q},\mathcal{Q}'$, we consider the \textit{equivalence relation} $\ch\mathcal{Q}\sim\ch\mathcal{Q}'$ by
\begin{align*}
\forall(b,d)\in C_m,\ 
(|\operatorname{Node}(\mathcal{Q}_{b,d})|=0
\iff
|\operatorname{Node}(\mathcal{Q}'_{b,d})|=0).
\end{align*}
Note that if $\mathcal{Q}$ is $C_m$-labeled, then all coefficients are $0$ or $1$.
In particular, if $\ch\mathcal{Q}\sim\ch\mathcal{Q}'$ and both $\mathcal{Q}$ and $\mathcal{Q}'$ are $C_m$-labeled, then $\ch\mathcal{Q}=\ch\mathcal{Q}'$.
Also note that $\ch\mathcal{Q}$ depends only on $\operatorname{Node}(\mathcal{Q})$ and the coloring, not on $\operatorname{Edge}(\mathcal{Q})$.
\end{definition}
Let us fix $N\in\Z_{\geq 1}$ and $\lambda_1\in\Bits{1}$.
Our goal in this subsection is to give a nice \textit{recursive expression} of
\begin{align}
\label{the starting socle nodes}
[\lambda_1|+)[+^{N-1}|-^{N-1})
\end{align}
i.e., the $(\lambda_1-^{N-1})$-component of ${}_{\pm}[\lambda_1|\pm)[+^{N-1}|\pm^{N-1})$,
and the character formula of \eqref{the starting socle nodes} using such an expression.
For $m\in\Z_{\geq 0}$, $\lambda_a\in\Bits{1}$ and a slim DAG $\mathcal{Q}$, let us consider the two operations
\begin{align}
&\Gamma(\mathcal{Q}{}_{\pm}[\lambda_a|\pm)[\pm^m|\pm^m))
:=
\mathcal{Q}[\lambda_a|+)[\pm^m|\pm^m),
\label{take restricted SL2 component from restricted B}
\\
&\mathcal{Q}[\pm^m|\pm^m)\slash{\sim}
:=
\mathcal{Q}[-\pm^{m-1}|\pm\pm^{m-1}).
\label{take restricted B component from restricted SL2}
\end{align}
For convenience, we write $\tilde{\Gamma}(\mathcal{
Q})$ for the composition $\Gamma(\mathcal{Q})\slash{\sim}$.
Then we have the recursive expression
\begin{align}
\label{the recursive expression of the restricted starting socle nodes}
[\lambda_1|+)[+^{N-1}|-^{N-1})
\cong
[\lambda_1|+)[-^{N-1}|+^{N-1})
=
\underbrace{\Gamma(\tilde{\Gamma}(\cdots\tilde{\Gamma}}_{\text{$N$-times}}({}_{\pm}[\lambda_1|\pm)[\pm^{N-1}|\pm^{N-1}))\cdots))
\end{align}
of \eqref{the starting socle nodes}.
On the other hand, the characters on both sides of \eqref{take restricted SL2 component from restricted B}, \eqref{take restricted B component from restricted SL2} are related by the formulas
\begin{align}
&\ch\mathcal{Q}[\lambda_{a}|+)[\pm^{m}|\pm^{m})_{[\nu]}
=
\sum_{n\geq 0}
\sum_{\nu_{a}\in\Bits{1}}
(-1)^{|\nu_{a}|_{-}}
\ch\mathcal{Q}{}_{\pm}[\lambda_{a}|\pm)_{[\nu_a][2n]}[\pm^{m}|\pm^{m})_{[\nu]}\ \ (\nu\in\Bits{m}),
\label{the formula connectiong SL2 to B}
\\
&
\begin{aligned}
\ch\mathcal{Q}{}_{\pm}[\lambda_{a}|\pm)_{[\nu_a]}[\pm^{m}|\pm^{m})_{[\nu]}
&\sim
\ch\mathcal{Q}[\pm|\pm)_{[\lambda_{a}\ast\nu_{a}][2|(\lambda_a,\nu_a)=(-,-)|]}[\pm^{m}|\pm^{m})_{[\nu]}
\\
&=
\ch\mathcal{Q}[\pm^{m+1}|\pm^{m+1})_{[(\lambda_{a}\ast\nu_a)\nu][2|(\lambda_a,\nu_a)=(-,-)|]}.
\end{aligned}
\label{the formula connectiong B to SL2}
\end{align}
Note that by Remark \ref{remark: segment and fragment}, $[\ |\ )$ can be replaced by $(\ |\ ]$ in these formulas.
Applying them to $[-^m|+^m)$, we have
\begin{align}
\label{in fact they are equal}
\begin{aligned}
\ch[-^m|+^m)
&\sim
\sum_{n_1,\dots,n_m\in\Z_{\geq 0}}
\sum_{\nu\in\Bits{m}}
(-1)^{|\nu|_{-}}
\ch[\pm^m|\pm^m)_{[\bar\nu][2(n_1+\cdots+n_m+|\nu|_{-})]}
\\
&\sim
\sum_{n\in\Z_{\geq 0}}\binom{n+m-1}{m-1}
\sum_{\nu\in\Bits{m}}
(-1)^{|\nu|_{-}}
\ch[\bar\nu|\pm^m)_{[2(n+|\nu|_{-})]},
\end{aligned}
\end{align}
where we use $\ch[\pm^m|\pm^m)_{[\nu]}\sim\ch[+^m|\pm^m)_{[\nu]}$ and \eqref{the relation between standard case and epsilon twisted case} in the second equivalence.
Moreover, both sides of \eqref{in fact they are equal} are equal.
To verify this fact, it suffices to check that for any $k\in\Z_{\geq 0}$, the coefficient of $x_{-^m,m+2k}$ in the RHS of \eqref{in fact they are equal} is $1$.
Since $[\bar\nu|\pm^m)_{[2(n+|\nu|_{-})]}$ is $C_m$-labeled and 
$(-^m,m+2k)=(\bar\nu|\nu)_{[2k]}\in[\bar\nu|\pm^m)_{[2(n+|\nu|_{-})]}$
iff $n\leq k-|\nu|_{-}$,
the coefficient of $x_{-^m,m+2k}$ in the RHS of \eqref{in fact they are equal} is given by
\begin{align*}
\sum_{\nu\in\Bits{m}}
(-1)^{|\nu|_{-}}
\sum_{0\leq n\leq k-|\nu_{-}|}
\binom{m+n-1}{m-1}.
\end{align*}
By induction for $k$, to check that this is equal to $1$, it is enough to check that
\begin{align*}
\sum_{0\leq a\leq m}
(-1)^a
\binom{m}{a}\binom{m+k-a}{m-1}=0
\ \ 
(\forall k\in\Z_{\geq 0}).
\end{align*}
It is verified by substituting $p(x)=\tfrac{(x+k)\cdots(x+k-m+2)}{(m-1)!}$ and $x=m$ to the algebraic identity \cite{R}
\begin{align*}
\sum_{0\leq a\leq m}
(-1)^a
\binom{m}{a}p(x-a)=m!a_m
\ \ 
(p(x):=\sum_{0\leq i\leq m}a_ix^i\in\C[x]).
\end{align*}
Therefore, both sides of \eqref{in fact they are equal} are equal.
Finally by replacing $m$ by $N-1$ and applying \eqref{the formula connectiong SL2 to B} again, we have
\begin{align}
\label{bosonic formula of the Z-invariants of Seifert 3-manifolds}
\ch[\lambda_1|+)[+^{N-1}|-^{N-1})
=
\sum_{n\geq 0}\binom{n+N-1}{N-1}
\sum_{\nu\in\Bits{N}}
(-1)^{|\nu|_{-}}
\ch{}_{\pm}[\lambda_1|\pm)[+^{N-1}|\pm^{N-1})_{[\nu_1\overline{\nu_{2\dots N}}][2(n+|\nu_{2\dots N}|_{-})]}.
\end{align}
For convenience, set $\lambda\ast{}_{\pm}[+|\pm)[+^{N-1}|\pm^{N-1}):={}_{\pm}[\lambda_1|\pm)[\lambda_{2\dots N}|\pm^{N-1})_{[-|\lambda|_{-}]}$.
Then \eqref{bosonic formula of the Z-invariants of Seifert 3-manifolds} is expressed as
\begin{align}
\label{bosonic formula of the Z-invariants of Seifert 3-manifolds 2}
\sum_{n\geq 0}\binom{n+N-1}{N-1}
\sum_{\nu\in\Bits{N}}
(-1)^{|\nu|_{-}}
\ch(\lambda_1\ast\nu_1)\overline{\nu_{2\dots N}}\ast{}_{\pm}[+|\pm)[+^{N-1}|\pm^{N-1})_{[2n+N+|\nu|_{-}-|\overline{\lambda_1}|_{-}]}.
\end{align}

Let us explain that \eqref{bosonic formula of the Z-invariants of Seifert 3-manifolds} can be regarded as a ``\textit{bosonic formula}" of the \textit{$\hat{Z}$-invariants of Seifert $3$-manifolds with $(N+2)$-singular fibers} (see also Remark \ref{remark: restriction of scope in the present paper}).
Let $p_1,\dots,p_N\in\Z_{\geq 2}$ are coprime integers and fix $1\leq r_i< p_i$ for each $1\leq i\leq N$.
For $\lambda\in\Bits{N}$ and $d\in\Z_{\geq 0}$, let us define the character $\ch\lambda\ast{}_{\pm}[+|\pm)[+^{N-1}|\pm^{N-1})_{[d]}$ by
\begin{align}
\label{character of general Fock module}
\ch\lambda\ast{}_{\pm}[+|\pm)[+^{N-1}|\pm^{N-1})_{[d]}
:=
\tfrac{1}{\eta(q)}q^{\Delta_{\lambda,d}}
:=
\tfrac{1}{\eta(q)}
q^{\frac{p_1\cdots p_N}{2}(-d+\sum_{1\leq i\leq N}\tfrac{\lambda_ir_i}{p_i})^2}.
\end{align}
In other words, at least in the character level, $\lambda\ast{}_{\pm}[+|\pm)[+^{N-1}|\pm^{N-1})_{[d]}$ is regarded as a \textit{Fock module}.
Then 
\begin{align}
\label{character of general singlet module}
\eqref{bosonic formula of the Z-invariants of Seifert 3-manifolds 2}
=
\frac{1}{\eta(q)}\sum_{n\geq 0}\binom{n+N-1}{N-1}
\sum_{\nu\in\Bits{N}}
(-1)^{|\nu|_{-}}
q^{\tfrac{p_1\cdots p_N}{2}(-2n-N-|\nu|_{-}+|\overline{\lambda_1}|_{-}+\frac{(\lambda_1\ast\nu_1)r_1}{p_1}-\sum_{i=2}^N\frac{\nu_ir_i}{p_i})^2}.
\end{align}
In the cases $N=1,2$, \eqref{character of general Fock module} and \eqref{character of general singlet module} exactly coincides with the characters of the corresponding Fock module and the irreducible module of the \textit{singlet Virasoro algebra} at level $1/p_1$ and $p_2/p_1$, and they almost give the ``\textit{bosonic form}" of the $\hat{Z}$-invariants of the corresponding Seifert $3$-manifolds with $3$- or $4$-exceptional fibers, respectively (e.g., \cite[Section 3,4]{CFGHP}).
For $N\geq 3$, the corresponding LCFT is not yet known at least mathematically, but at the $q$-series level, \eqref{character of general singlet module} almost gives the $\hat{Z}$-invariants of the corresponding Seifert $3$-manifolds with $(N+2)$-exceptional fibers (e.g., \cite[Definition 2.1]{MT}).
In other words, \eqref{character of general singlet module} is expected to be the character of the irreducible module of the \textit{conjectural} LCFT corresponding to the  Seifert $3$-manifold with $(N+2)$-exceptional fibers.
For more detail, see Section \ref{subsection: glimpse} below.

In conclusion, we have already obtained a  \textit{recursive combinatorial computation algorithm} (or an \textit{elementary combinatorial interpretation}) for the bosonic formula of the $\hat{Z}$-invariants for Seifert $3$-manifolds. 
In the remainder of this paper, we will give additional structures necessary to connect this algorithm to the study of the \textit{representation theoretic aspect} of LCFT, i.e., the theory of \textit{shift systems and Feigin-Tipunin constructions} \cite{LS}.

\subsection{Horizontal extension}
\label{subsection: Cartan extension}
From now on, we consider the horizontal position (Cartan weight) of DAGs. 
In this subsection, after introducing the \textit{horizontal extension} of DAGs in Section \ref{subsection: hypercubization}, we will explain how the recursive procedure in Section \ref{subsection: derivation of Z-invariants} is interpreted.
This interpretation plays a fundamental role in Section \ref{section: Lie algebraic aspects}.
\begin{definition}
\label{definition: bilateral DAGs}
We call DAGs with the form
$\mathcal{Q}:=\mathcal{Q}^{\mathrm{left}}\sqcup\mathcal{Q}^{\mathrm{right}}$ \textit{bilateral}.
For a bilateral DAG  $\mathcal{Q}$, the \textit{horizontal extension} ${\mathcal{Q}}_{[2\Z_{\geq 0}]}$ and the \textit{reverse} $\bar{\mathcal{Q}}$ of $\mathcal{Q}$ are the bilateral DAGs defined by 
$({\mathcal{Q}}_{[2\Z_{\geq 0}]})^{\mathrm{left/right}}:=\mathcal{Q}^{\mathrm{left/right}}_{[2\Z_{\geq 0}]}$ and
$\bar{\mathcal{Q}}^{\mathrm{left/right}}:=\mathcal{Q}^{\mathrm{right/left}}$, respectively.
Also, we use the notations
\begin{align*}
\leftop{Q}
:=
\min\{d(v)
\ | \ 
v\in\Left{Q}\},\ \ 
\rightop{Q}
:=
\min\{d(v)\ | \ v\in\Right{Q}\},\ \ 
\gap{Q}
:=
\rightop{Q}-\leftop{Q}.
\end{align*}
By abuse of notation, we call $\mathcal{Q}$ or $\mathcal{Q}_{[2\Z_{\geq 0}]}$ is $C_a$-labeled (resp. slim) if  $\mathcal{Q}^{\mathrm{left}}$ and $\mathcal{Q}^{\mathrm{right}}$ so are.
\end{definition}

\begin{definition}
\label{definition: delta cubizations of bilateral DAGs}
For $m\in\Z_{\geq 0}$, $\lambda,\nu\in\Bits{m}$, and a slim bilateral DAG $\mathcal{Q}=\mathcal{Q}^{\mathrm{left}}\sqcup\mathcal{Q}^{\mathrm{right}}$, the \textit{fragmented $\mathcal{Q}(\pm^m|\pm^m)_{[\nu]}$}
is the fat bilateral DAG $\mathcal{Q}[\pm^m|\pm^m]_{[\nu]}=\mathcal{Q}[\pm^m|\pm^m]_{[\nu]}^{\mathrm{left}}\sqcup\mathcal{Q}[\pm^m|\pm^m]_{[\nu]}^{\mathrm{right}}$ defined by
\begin{align*}
\mathcal{Q}[\pm^m|\pm^m]_{[\nu]}^{\mathrm{left}}
:=
\mathcal{Q}[\pm^m|\pm^m)_{[\nu]},
\ \ 
\mathcal{Q}[\pm^m|\pm^m]_{[\nu]}^{\mathrm{right}}
:=
\mathcal{Q}(\pm^m|\pm^m]_{[\nu]}.
\end{align*}
More generally, for $D,E\subseteq\Bits{m}$, we have the corresponding induced subgraph
\begin{align*}
\mathcal{Q}[D|E]_{[\nu]}
:=
\mathcal{Q}[D|E)_{[\nu]}
\sqcup
\mathcal{Q}(\bar{E}|\bar{D}]_{[\nu]}
\end{align*}
of $\mathcal{Q}[\pm^m|\pm^m]_{[\nu]}$.
In particular, for $\lambda\in\Bits{m}$, we call $\mathcal{Q}[\lambda|\pm^m]_{[\nu]}$ the \textit{$(\lambda\ast\nu)$-fragment of $\mathcal{Q}[\pm^m|\pm^m]_{[\nu]}$}.
In the case where $\mathcal{Q}=[\ |\ ]:=[\ |\ )\sqcup(\ |\ ]$, we will omit $\mathcal{Q}$ as in the last of Section \ref{subsection: hypercubization}.
As in the same manner above, this notation is well-defined and the compositions of the operations also hold by Lemma \ref{Lemma: associativity of lert/right cubizations}.
\end{definition}
Note that under the operations in Definition \ref{definition: delta cubizations of bilateral DAGs}, the gap of a bilateral DAG $\mathcal{Q}$ changes as follows:
\begin{align}
\label{several formulas about gap}
\operatorname{gap}(\mathcal{Q}[D|E]_{[\nu]})
=
\operatorname{gap}(\mathcal{Q})+|\bar{D}|_{-}+|\bar{E}|_{-}-|{D}|_{-}-|{E}|_{-},
\end{align}
In particular, 
$\operatorname{gap}(\mathcal{Q}[\lambda|\pm^m]_{[\nu]})
=\operatorname{gap}(\mathcal{Q})+m-2|\lambda|_{-}$
and
$\operatorname{gap}(\mathcal{Q}[\pm^m|\pm^m]_{[\nu]})=\operatorname{gap}(\mathcal{Q})$.
To consider the bilateral extensions of the DAGs in Section \ref{subsection: derivation of Z-invariants}, the following notations will be convenient.

\begin{definition}
\label{definition: X-type DAGs}
A slim bilateral DAG $\mathcal{Q}_{[2\Z_{\geq 0}]}$ is called \textit{$(B_+,0)$-DAG} 
if 
$\mathcal{Q}
\simeq
[\lambda_1|\pm]$
for some $\lambda_1\in\Bits{1}$.
Note that if $\mathcal{Q}\cong[\mu\lambda_1|\mu'\pm]_{[\nu]}$ for some $n\in\Z_{\geq 0}$, $\mu,\mu'\in\Bits{n}$ and $\nu\in\Bits{n+1}$, then this condition is equivalent to  
$\operatorname{gap}([\mu\lambda_1|\mu'\pm]_{[\nu]})\in\{\pm 1\}$ 
(i.e., $n-|\mu\mu'\lambda_1|_{-}\in\{0,-1\}$).
Also, 
${}_{-}\mathcal{Q}_{[2\Z_{\geq 0}]}$ and the subgraph of ${}_{\pm}\mathcal{Q}_{[2\Z_{\geq 0}]}$ corresponding to $[\lambda_1|+]_{[2\Z_{\geq 0}]}$ are called the \textit{$(B_{-},0)$-DAG} and the \textit{$(\SL,0)$-DAG}, respectively.
If we want to emphasize that a slim bilateral DAG $\mathcal{Q}_{[2\Z_{\geq 0}]}$ is an $(X,0)$-DAG, we denote it as ${}_X\mathcal{Q}$ ($X\in\{B_+,B_-,\SL\}$).
Also, if we can use both ${}_{B_+}\mathcal{Q}$ and ${}_{B_-}\mathcal{Q}$ in some discussion, the symbol ${}_{B}\mathcal{Q}$ is used for convenience.
Under the restriction of 
${}_{\bullet}\mathcal{Q}_{[2\Z_{\geq 0}]}\simeq{}_{\bullet}[\lambda_1|\pm]_{[2\Z_{\geq 0}]}$ for $\bullet\in\Bits{1}$, the horizontal position (Cartan weight) of ${}_{X}\mathcal{Q}$ are defined by
\begin{align*}
{}_{B_{\bullet}}\mathcal{Q}^h
\rightarrow
\begin{cases}
{}_{\bullet}[\lambda_1|\pm)_{[h-h_0]}&h\geq h_0
\\
{}_{\bullet}(\pm|\bar\lambda_1]_{[h_0-h-2]}&h<h_0
\end{cases},
\ \ 
h_0:=
\begin{cases}
0&\operatorname{gap}({}_B\mathcal{Q})=1,\\
1&\operatorname{gap}({}_B\mathcal{Q})=-1
\end{cases},
\ \ 
{}_{\SL}\mathcal{Q}^h
\subseteq
{}_{B}\mathcal{Q}^h.
\end{align*}
Note that $\operatorname{Edge}({}_{\SL}\mathcal{Q})=\phi$ and ${}_{\SL}\mathcal{Q}^h\cong{}_{\SL}\mathcal{Q}^{-h}$.
More generally, the fat bilateral DAG $\mathcal{V}$ such that 
$\mathcal{V}
\simeq
{}_X\mathcal{Q}(\pm^{m}|\pm^{m})$
(resp.
$\mathcal{V}
\simeq
{}_X\mathcal{Q}[\pm^{m}|\pm^{m}]$)
for some $m\in\Z_{\geq 0}$ 
are called \textit{$(X,m)$-DAG}  (resp. \textit{fragmented $(X,m)$-DAG}).
The Cartan weights of these DAGs are the same as that of ${}_X\mathcal{Q}$.
We call $h_0$ above the \textit{minuscule weight}.
\end{definition}

Let us consider the bilateral extension of the case in Section \ref{subsection: derivation of Z-invariants}, i.e., the fragmented $(B,m)$-DAG
\begin{align*}
{}_{B}[\lambda_1|\pm][\pm^m|\pm^m]
\end{align*}
with minuscule weight $|\lambda_1|_{-}$.
As in \eqref{take restricted SL2 component from restricted B} and \eqref{take restricted B component from restricted SL2}, the fragmented $(\SL,m)$-DAG $[\lambda_1\pm^m|+\pm^m]_{[2\Z_{\geq 0}]}$ and its subgraph $[\lambda_1-\pm^{m-1}|+\pm^m]_{[2\Z_{\geq 0}]}=[\lambda_1-|+\pm][\pm^{m-1}|\pm^{m-1}]_{[2\Z_{\geq 0}]}$ can be considered, respectively.
In fact, under the restriction to $h=h_0$, they coincides with \eqref{take restricted SL2 component from restricted B} and \eqref{take restricted B component from restricted SL2}, so we use the same notations $\Gamma$, $\slash{\sim}$, and $\tilde{\Gamma}:=\slash{\sim}\circ\Gamma$ to the bilateral cases.
Then by \eqref{several formulas about gap},
\begin{align*}
\tilde{\Gamma}({}_{B}[\lambda_1|\pm][\pm^m|\pm^m])
=
{}_{B_+}[\lambda_1-|+\pm][\pm^{m-1}|\pm^{m-1}]
\end{align*}
is the fragmented $(B_+,m-1)$-DAG with minusucle weight $|\lambda_1|_{-}$ again.
In the same manner, for each $0\leq n\leq m$, $\Gamma$ sends the fragmented $(B,n)$-DAGs to the fragmented $(\SL,n)$-DAG, and $\slash{\sim}$ sends the fragmented $(\SL,n)$-DAG to the fragmented $(B_{+},n-1)$-DAGs, and the series of procedures keep the minuscule weight constant.
Finally we obtain the \textit{recursive expression} of the $(\SL,0)$-DAG 
\begin{align}
\label{recursive expression of starting socle nodes}
{}_{\SL}[\lambda_1+^m|+-^{m}]
\cong
{}_{\SL}[\lambda_1-^m|+^{m+1}]
=
\underbrace{\Gamma(\tilde{\Gamma}(\cdots\tilde{\Gamma}}_{\text{$(m+1)$-times}}({}_{B}[\lambda_1|\pm][\pm^m|\pm^m])\cdots)).
\end{align}
In particular, in the case $B=B_-$, the LHS of \eqref{recursive expression of starting socle nodes} is the \textit{terminal nodes} of ${}_B[\lambda_1|\pm][+^m|\pm^m]$.
\begin{remark}
\label{remark: from singlet to triplet}
The character of \eqref{recursive expression of starting socle nodes} is computed by taking the sum of (vertical shifted) \eqref{bosonic formula of the Z-invariants of Seifert 3-manifolds 2} with respect to the Cartan weights.
On the side of $\hat{Z}$-invariants/LCFTs, \eqref{character of general Fock module} and \eqref{character of general singlet module} corresponds to the ``\textit{Fock module}" and the irreducible module of the ``\textit{singlet Virasoro algebra}", respectively, whereas the horizontal extensions 
${}_{B}[\lambda_1|\pm][+^m|\pm^m]$ and ${}_{B}[\lambda_1|\pm][+^m|-^m]$, etc. correspond to the 
irreducible modules of the ``\textit{lattice VOA}" and the 
``\textit{triplet Virasoro algebra}", respectively.
\end{remark}

\section{An abelian categorification of $\hat{Z}$-invariants}
\label{section: Lie algebraic aspects}
\subsection{Annotated Loewy diagrams}
\label{section: annotated Loewy diagrams}
Let $M$ be an object of an abelian category.
We call $M$ is \textit{semisimple} if it is the direct sum of simple subobjects.
A \textit{Loewy series} of $M$ is a minimal length series among strictly ascending series of subobjects 
$0=M_0\subset M_1\subset\cdots\subset M_{l(M)}=M$
such that each successive quotient $M_k/M_{k-1}$ is semisimple. 
In this paper, the minimal length $l(M)$ is always finite.
We use the letters $\iota_k$ and $\pi_k$ for the embedding 
$\iota_k\colon M_k\hookrightarrow M$
and the projection 
$\pi_k\colon M\twoheadrightarrow M/M_k$, respectively.
Simple subobjects in each $M_k/M_{k-1}$ are called \textit{(simple) composition factors}, and by Jordan-H\"{o}lder theorem, they are unique up to isomorphisms.
For a fixed Loewy series of $M$, the \textit{Loewy diagram} is the diagram with $l(M)$ layers such that the $k$-th layer consists of the composition factors of $M_k/M_{k-1}$.
In addition, if the embedding of each composition factor $X_k$ into $M_k/M_{k-1}$ is also fixed, we call the Loewy series is \textit{annotated}.
For a fixed annotated Loewy series, the \textit{annotated Loewy diagram} is obtained by adding edges to the Loewy diagram according to the following rule: 
for each $1\leq k\leq l(M)$ and pair of composition factors $(X_k,X_{k-1})$, $X_k\subset M_k/M_{k-1}$ and $X_{k-1}\subset M_{k-1}/M_{k-2}$,
add the edge $X_k\rightarrow X_{k-1}$ if there exists a subquotient $X\subseteq M_k/M_{k-2}$ with a non-split short exact sequence $0\rightarrow X_{k-1}\rightarrow X\rightarrow X_k\rightarrow 0$.
In general, annotated Loewy diagrams are \textit{not} uniquely determined from mere Loewy series.
However, if all composition factors are distinct, then it is uniquely determined.
Let $C$ be the set of isomorphism classes of composition factors in $M$. By coloring each composition factor $X_k$ with its isomorphism class, we obtain the $C$-colored DAG from an annotated Loewy diagram.
Conversely, we can also consider the case where for a given $C$-colored DAG $\mathcal{Q}$, there exists an object $M$ with an annotated Loewy diagram represented by $\mathcal{Q}$ in the above sense.
When an annotated Loewy series of $M$ and a $C$-colored DAG $\mathcal{Q}$ are correpsonding in the above-mentioned sense, 
we write $M\sim\mathcal{Q}$, and
the structures of $M$ and $\mathcal{Q}$ corresponds as follows:
$M$ and $\mathcal{Q}$, 
a simple composition factor $X_k$ in the annotated Loewy series of $M$ and the corresponding node in $\mathcal{Q}$,
the subobject $\pi_k^{-1}(X_k)$ and the corresponding subgraph of $\mathcal{Q}$ (i.e., the induced subgraph generated by the node $X_k$),
the inclusion relation among $\pi_k^{-1}(X_k)$ and reachability in $\mathcal{Q}$,
and
the equivalence class of $X_k$ with respect to isomorphisms and the corresponding color $c(X_k)$ in $C$,
etc. 
As mentioned above, if $\mathcal{Q}$ is $C$-labeled, then the annotated Loewy diagram of $M$ is uniquely determined.
In this paper, we only consider the case $C=C_m$ for some $m\in\Z_{\geq 0}$.

\subsection{Defragmentation}
\label{subsection: defragmentation}
In the remainder of this paper, by using Freyd-Mitchell embedding theorem, we consider an object $M$ to be a left modules over a ring $R$ (unless otherwise noted, left and $R$ are omitted, and simply call $M$ a module below).
For a subset $S\subseteq M$, denote $\langle S\rangle\subseteq M$ the $R$-submodule generated by $S$.

\begin{definition}
Let $(M[\lambda])_{\lambda\in\Bits{n}}$ be a family of modules. 
A module $M$ is a \textit{deformation of $\bigoplus_{\lambda\in\Bits{n}}M[\lambda]$} if 
\begin{enumerate}
\item 
$M=\bigoplus_{\lambda\in\Bits{n}}M[\lambda]$ as vector spaces.
\item 
$\langle M[\lambda]\rangle\cap M[\mu]\not=0\Rightarrow\mu\leq\lambda$.
Denote $M[\leq\lambda]:=\langle M[\lambda]\rangle$.
\item 
$\langle M[\lambda]\rangle/(\langle M[\lambda]\rangle\cap\sum_{\mu<\lambda}\langle M[\mu]\rangle)\simeq M[\lambda]$ as modules.
Denote
$M[<\lambda]:=\langle M[\lambda]\rangle\cap\sum_{\mu<\lambda}\langle M[\mu]\rangle$.
\end{enumerate} 
For a submodule $N$ of $M$, denote the submodules 
$N[\leq\lambda]:=N\cap M[\leq\lambda]$, $N[<\lambda]:=N\cap M[<\lambda]$ of $N$.
Note that $N[\leq\lambda]/N[<\lambda]\simeq\langle N\cap M[\lambda]\rangle_{\text{old}}:=N[\lambda]$ as modules, where the RHS is the submodule of $M[\lambda]$ generated by $N\cap M[\lambda]$.
Therefore we sometimes use the same symbol $N[\lambda]$ for the subquotients $N[\leq\lambda]/N[<\lambda]$.
\end{definition}
In short, a deformation $M$ of $\bigoplus_{\lambda\in\Bits{n}}M[\lambda]$ is a (generally nontrivial) extension  of $(M[\lambda])_{\lambda\in\Bits{n}}$ along the partial order \eqref{partial ordering to define the quantization} such that the restriction to each $M[\lambda]$ is consistent with the original module structures.
Let us use such an ambiguity of module structures to define the ``reverse operation" of Definition \ref{definition: left/right delta cubizations}.

\begin{definition}
\label{def: defragmentation}
Let $\mathcal{Q}$ be a slim DAG and $(M[\lambda])_{\lambda\in\Bits{n}}$ be a family of modules such that $M[\lambda]\sim\mathcal{Q}[\lambda|\pm^n)(\pm^m|\pm^m)$ (resp. $M[\lambda]\sim\mathcal{Q}(\pm^n|\bar\lambda](\pm^m|\pm^m)$).
In particular, $\bigoplus_{\lambda\in\Bits{n}}M[\lambda]\simeq\mathcal{Q}[\pm^n|\pm^n)(\pm^m|\pm^m)$ (resp. $\bigoplus_{\lambda\in\Bits{n}}M[\lambda]\simeq\mathcal{Q}(\pm^n|\pm^n](\pm^m|\pm^m)$).
A deformation $M$ of $\bigoplus_{\lambda\in\Bits{n}}M[\lambda]$
is called \textit{defragmentation} if $M\sim\mathcal{Q}(\pm^{m+n}|\pm^{m+n})$.
Also, for a slim bilateral DAG $\mathcal{Q}_{[2\Z_{\geq 0}]}=\mathcal{Q}^{\mathrm{left}}_{[2\Z_{\geq 0}]}
\sqcup
\mathcal{Q}^{\mathrm{right}}_{[2\Z_{\geq 0}]}$
and a family of modules $(V[\lambda])_{\lambda\in\Bits{n}}$ 
such that
$V[\lambda]\sim\mathcal{Q}_{[2\Z_{\geq 0}]}[\lambda|\pm^n](\pm^m|\pm^m)$,
a deformation $V$ of $\bigoplus_{\lambda\in\Bits{n}}V[\lambda]$ is called \textit{defragmentation} if
$V\sim\mathcal{Q}_{[2\Z_{\geq 0}]}(\pm^{m+n}|\pm^{m+n})$
(i.e., for each $d\in\Z_{\geq 0}$, take the defragmentation of submodules $\bigoplus_{\lambda\in\Bits{n}}M[\lambda]_{[2d]}^{\mathrm{left}}\sim\mathcal{Q}^{\mathrm{left}}_{[2d]}[\pm^n|\pm^n)(\pm^m|\pm^m)$
(resp.
$\bigoplus_{\lambda\in\Bits{n}}M[\lambda]_{[2d]}^{\mathrm{right}}\sim\mathcal{Q}^{\mathrm{right}}_{[2d]}(\pm^n|\pm^n](\pm^m|\pm^m)$).
Note that by considering the reverse $\bar{\mathcal{Q}}$, we can defragment $(\pm^n|\pm^n]$ and $[\pm^n|\pm^n)$ on the left and right side, respectively.
We sometimes use the integral symbol for defragmentations and the corresponding annotated Loewy diagrams, e.g., 
\begin{align}
\label{integral symbol for defragmentation}
M
=\int_{\lambda\in\Bits{n}}M[\lambda]
\sim\int_{\lambda\in\Bits{n}}\mathcal{Q}_{[2\Z_{\geq 0}]}[\lambda|\pm^n](\pm^m|\pm^m)
=\mathcal{Q}_{[2\Z_{\geq 0}]}(\pm^{n+m}|\pm^{n+m}).
\end{align}
\end{definition}
\begin{remark}
\label{Fubini theorem and partial defragmentation}
Let $(M[\lambda])_{\lambda\in\Bits{N}}$ be a family of modles with annotated Loewy diagrams as in Definition \ref{def: defragmentation}.
In addition, let us assume that for each $I,I'\subseteq\{1,\dots,N\}$ such that $I\cap I'=\phi$, an iterated defragmentation $\int_{\lambda_{I}\in\Bits{I}}\int_{\lambda_{I'}\in\Bits{I'}}M[\lambda]$ is defined.
If ``Fubini's theorem" 
\begin{align}
\label{Fubini's theorem in our case}
\int_{\lambda_{I}\in\Bits{I}}\int_{\lambda_{I'}\in\Bits{I'}}M[\lambda]
=
\int_{\lambda_{I'}\in\Bits{I'}}\int_{\lambda_{I}\in\Bits{I}}M[\lambda]
=
\int_{\lambda_{I\sqcup I'}\in\Bits{I\sqcup I'}}M[\lambda],
\end{align} 
holds in any cases, 
then 
for each $I\subseteq\{1,\dots,N\}$ and $\lambda_{I}\in\Bits{I}$, the subquotient of $\int_{\lambda\in\Bits{N}}M[\lambda]$ corresponding to $\lambda_I\times\Bits{\bar{I}}$ is isomorphic to $\int_{\lambda_{\bar{I}}\in\Bits{\bar{I}}}M[\lambda_{I}\lambda_{\bar{I}}]$.
Namely, ``partial" defragmentations and gradings are compatible.
\end{remark}

Let us consider the defragmentations of the modules 
\begin{align}
\label{fragments appearing in LCFT contexts}
(\pm|\lambda_1][\lambda_{2\dots N}|\pm^{N-1})_{[2d]},
\   
[\bar{\lambda}_1|\pm)(\pm^{N-1}|\bar{\lambda}_{2\dots N}]_{[2d]}
\ \ \ 
(\lambda\in\Bits{N},\ d\in\Z_{\geq 0})
\end{align}
with the annotated Loewy diagrams given by the same symbols.
The modules
${}_{-}[\pm|\pm][\pm^{N-1}|\pm^{N-1}]_{[2\Z_{\geq 0}]}$
etc. shall also follow the same rule.
For each $\lambda_1\in\Bits{1}$, let us assume that a defragmentation 
\begin{align*}
V_{\lambda_1}
=
\int_{\lambda_{2\dots N}\in\Bits{N-1}}
{}_{-}[\lambda_1|\pm][\lambda_{2\dots N}|\pm^{N-1}]_{[2\Z_{\geq 0}]}
\sim
{}_{-}[\lambda_1|\pm](\pm^{N-1}|\pm^{N-1})_{[2\Z_{\geq 0}]}
\end{align*}
exists, where the RHS is the $(B_{-},N-1)$-DAG. 
By considering extensions of these two modules $V_{\lambda_1}$ ($\lambda_1\in\Bits{1}$), let us try to make modules $P_+,P_-$ with $(\SL,N)$-DAG of the minuscule weights $h_0=0,1$, respectively, as annotated Loewy diagrams.
The first and obvious answer is to consider the defragmentation with respect to $\lambda_1$.
Namely, $P_{-}=\int_{\lambda_1\in\Bits{1}}V_{\lambda_1}$ has the $(\SL,N)$-DAG $[\ |\ ]_{[2\Z_{\geq 0}]}(\pm^N|\pm^N)$ of $h_0=1$ as an annotated Loewy diagram by an extension $0\rightarrow V_{-}^{h}\rightarrow P_{-}^h\rightarrow V_{+}^{h-1}\rightarrow 0$ ($h\in 2\Z+1$).
On the other hand, if one tries to make a module $P_+$ with the $(\SL,N)$-DAG of $h_0=0$ as an annotated Loewy diagram by an extension $0\rightarrow V_{+}^{h}\rightarrow P_{+}^h\rightarrow V_{-}^{h-1}\rightarrow 0$ ($h\in 2\Z$), the hypercube corresponding to the trivial $\SL$-module $\C$ is ``lacking" because there are not enough simple composition factors.
Namely, we consider the module $P_{+}\simeq\bigoplus_{d\in\Z_{\geq 0}}\C^{2d+1}\otimes W_{2d}$ such that
$W_{2d}\sim(\pm|\pm)_{[-]}(\pm^{N-1}|\pm^{N-1})_{[2(d-1)]}$ for $d\geq 1$, but
$W_0\sim((\pm|\pm)_{[-]}\setminus(+|+)_{[-]})(\pm^{N-1}|\pm^{N-1})_{[-2]}$.
Under the same notation,  $P_-\simeq\bigoplus_{d\in\Z_{\geq 1}}\C^{2d}\otimes W_{2d-1}$ and $W_{2d-1}\sim(\pm^N|\pm^N)_{[2(d-1)]}$.
Note that such a ``lacking defragmentation" for ${}_\bullet[\pm^n|\pm^n](\pm^m|\pm^m)$ ($\bullet\in\Bits{1}$) happens only for the case as above, i.e., $n=1$ and $\C\otimes W_{0}$.
\begin{remark}
\label{remark: why one direction is reversed?}
One of the reasons for considering only the first parameter $\lambda_1$ ``reversed" in \eqref{fragments appearing in LCFT contexts} is to be consistent with the data on the LCFT side (see also the second half of Section \ref{subsection: derivation of Z-invariants} and Remark \ref{remark: from singlet to triplet}). 
In fact, for $N=1,2$, the DAGs in \eqref{fragments appearing in LCFT contexts} is consistent with the Virasoro module structure of the corresponding Fock modules (i.e., the case where the ring $R$ is the universal enveloping algebra of the Virasoro algebra, see e.g., \cite[Section 2]{FGST}).
Under this correspondence, the modules $P_{\pm}$ are the projective covers of the corresponding simple modules of the ``triplet Virasoro algebra" in the sense of Remark \ref{remark: from singlet to triplet} (cf. \cite[Figure 6.3]{N}).
This provides a more plausible explanation for the above-mentioned reversal of $\lambda_1$. 
In fact, if no such reversal exists and $[\lambda|\pm^N)_{[2d]}$,  $(\pm^N|\bar\lambda]_{[2d]}$ are treated as ``Fock modules”, there are $N$ possible choices of $P_+$ depending on which parameter $\lambda_i$ is glued at the last step in the previous paragraph.
However, it contradicts to the uniqueness of the projective cover of a simple module.
\end{remark}
Let us combine the recursive computation in the last paragraph of Section \ref{subsection: Cartan extension} with the discussion above.
If there are defragmentations at each step of the computation, then each of the operations $\Gamma$ and $/\sim$ can be regarded as those of taking the $(\SL,m)$-DAG from $(B,m)$-DAG and $(B,m)$-DAG from $(\SL,m+1)$-DAG, respectively.
Put another way, starting from the ``projective cover" $P_{\pm}$ corresponding to the $(\SL,N)$-DAGs, the above two operations $\Gamma,/\sim$ are repeated to iterate through the $\SL$-type and $B$-type, gradually decreasing their dimensions, until finally we obtain the ``simple module" corresponding to the $(\SL,0)$-DAGs.
In the next section, we will see that if a defragmentation $[\lambda_1-^n|+^n\pm](\pm^m|\pm^m)_{[2\Z_{\geq 0}]}$ actually has  a $B$-action compatible with the $R$-module structure, it is an example of the \textit{shift system} \cite{LS}, and the operation $\Gamma$ is that of taking the induced $\SL$-module (or taking the global section functor $H^0(\SL\times_B-)$, i.e., \textit{Feigin-Tipunin construction}).
In particular, if such a situation holds at each step, it gives a \textit{recursive shift system} and \textit{nested FT construction}.

\subsection{Recursive shift systems}
\label{subsection: recursive shift systems}
In Definition \ref{definition of the shift system}, Theorem \ref{main theorem for shift system} below, we use the notation in \cite[Section 1.1]{LS}.
\begin{definition}\label{definition of the shift system}
\cite[Definition 1.2]{LS}
The \textit{shift system} refers to a triple
$(\Lambda,\uparrow,\{V_\lambda\}_{\lambda\in\Lambda})$    
that satisfies the following:
\begin{enumerate}
\item\label{FT data 1}
$\Lambda$ is a $W$-module.
Denote $\ast\colon W\times\Lambda\rightarrow\Lambda$ the $W$-action on $\Lambda$.
\item\label{FT data 2}
$\uparrow\colon W\times\Lambda\rightarrow P$ is a map (\textit{shift map}) such that
\begin{enumerate}
\item\label{associativity of carry-over}
$\sigma_i\sigma\uparrow\lambda=\sigma_i(\sigma\uparrow\lambda)+\sigma_i\uparrow(\sigma\ast\lambda)$.
\item\label{length and carry-over}
If $l(\sigma_i\sigma)=l(\sigma)+1$, then $(\sigma\uparrow\lambda,\alpha_i^\vee)\geq 0$.
\item\label{-1 property of carry-over}
If $\lambda\not\in\Lambda^{\sigma_i}$, then $(\sigma_i\uparrow\lambda,\alpha_i^\vee)=-1$.
If $\lambda\in\Lambda^{\sigma_i}$, then $\sigma_i\uparrow\lambda=-\alpha_i$.
\end{enumerate}
\item\label{FT data 3}
$V_\lambda$ is a weight $B_{-}$-module such that 
\begin{enumerate}
\item\label{conformal grading of FT data}
$V_\lambda=\bigoplus_{\Delta}V_{\lambda,\Delta}$ and each $V_{\lambda,\Delta}$ is a finite-dimensional weight $B_{-}$-submodule.
\item\label{Felder complex of FT data}
For any $i\in\Pi$ and $\lambda\not\in\Lambda^{\sigma_i}$, there exists $P_i$-submodules $W_{i,\lambda}\subseteq V_{\lambda}$, $W_{i,\sigma_i\ast\lambda}\subseteq V_{\sigma_i\ast\lambda}$ and a $B_{-}$-module homomorphism 
$Q_{i,\lambda}\colon V_\lambda\rightarrow W_{i,\sigma_i\ast\lambda}(
\sigma_i\uparrow\lambda)$ such that we have the short exact sequence 
\begin{align*}
0\rightarrow W_{i,\lambda}
\rightarrow V_{\lambda}
\xrightarrow{Q_{i,\lambda}} W_{i,\sigma_i\ast\lambda}(\sigma_i\uparrow\lambda)
\rightarrow 0
\end{align*}
of $B_{-}$-modules, called \textit{Felder complex}.
If $\lambda\in\Lambda^{\sigma_i}$, then $V_\lambda$ has the $P_i$-module structure.
\end{enumerate}
\end{enumerate}
For a weight $B_{-}$-module $V$ in a shift system, $H^0(G\times_BV)$ is called the \textit{Feigin-Tipunin construction}.
\end{definition}
\begin{theorem}
\label{main theorem for shift system}
\cite[Theorem 1.1, Corollary 1.15]{LS}
Let $(\Lambda,\uparrow,\{V_\lambda\}_{\lambda\in\Lambda})$ is a shift system.
\begin{enumerate}
\item\label{main theorem 1(1)}
\textit{(Feigin-Tipunin conjecture/construction)}
The evaluation map 
\begin{align*}
\operatorname{ev}\colon H^0(G\times_BV_\lambda)\rightarrow\bigcap_{i\in\Pi}\ker Q_{i,\lambda}|_{V_\lambda},\ \ s\mapsto s(\operatorname{id}_{G/B})
\end{align*}
is injective, and is isomorphic iff $\lambda\in\Lambda$ satisfies the following condition:
\begin{align}
\text{For any $(i,j)\in\Pi\times\Pi$, $\lambda\in\Lambda^{\sigma_j}$ or $(\sigma_j\uparrow\lambda,\alpha_i^\vee)=-\delta_{ij}$.}
\end{align}
Note that the image of $H^0(G\times_BV_\lambda)$ is the maximal $G$-submodule (i.e, the maximal $B$-submodule such that its $B$-action can be extended to the $G$-action)
of $V_\lambda$.
\item\label{main theorem 1(2)}
\textit{(Borel-Weil-Bott type theorem)}
For a minimal expression $w_0=\sigma_{i_{l(w_0)}}\cdots\sigma_{i_1}\sigma_{i_0}$ of the longest element $w_0$ of $W$ (where $\sigma_{i_0}=\operatorname{id}$ for convenience), if $\lambda\in\Lambda$ satisfies the following condition:
\begin{align}
\text{
For any $0\leq m\leq l(w_0)-1$,
$(\sigma_{i_m}\cdots\sigma_{i_0}\uparrow\lambda,\alpha_{i_{m+1}}^\vee)=0$,
}
\end{align}
then we have a natural $G$-module isomorphism
\begin{align*}
H^n(G\times_BV_\lambda)\simeq H^{n+l(w_0)}(G\times_BV_{w_0\ast\lambda}(w_0\uparrow\lambda)).
\end{align*}
In particular, $H^{n>0}(G\times_BV_\lambda)\simeq 0$ and we have the  character formula
\begin{align*}
\ch_q H^0(G\times_BV_\lambda)
=
\sum_{\beta\in P_+}\operatorname{dim}L_{\beta}
\sum_{\sigma\in W}(-1)^{l(\sigma)}\ch_q V_{\sigma\ast\lambda}^{h=\beta-\sigma\uparrow\lambda}
\end{align*}
for $\ch_qV:=\sum_{\Delta}\dim V_\Delta q^\Delta$ and the irreducible $\g$-module $L_\beta$ with highest weight $\beta$.
\end{enumerate}
\end{theorem}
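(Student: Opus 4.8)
\section*{Proof proposal}

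The plan is to prove both parts by pulling everything back to a rank-one ($SL_2$) computation along the $\mathbb{P}^1$-fibrations $p_i\colon G/B\to G/P_i$; the shift-map axioms of Definition \ref{definition of the shift system} are exactly what make that rank-one computation uniform across $i$ and across the $W$-orbit of $\lambda$. This is the strategy of \cite{FT,S1,S2} rephrased so as to use only the abstract data $(\Lambda,\uparrow,\{V_\lambda\})$, never an actual V(O)A structure.

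For part \ref{main theorem 1(1)}, I would first check that $\operatorname{ev}$ is well defined and $B_-$-equivariant with image a $B_-$-submodule of $V_\lambda$. Using that each $V_{\lambda,\Delta}$ is finite-dimensional and the conformal grading is bounded below (axiom \ref{conformal grading of FT data}), the big-cell cover of $G/B$ reduces injectivity of $\operatorname{ev}$, and the identification of its image with the maximal $G$-submodule of $V_\lambda$, to the classical statement for a homogeneous bundle with finite-dimensional fibre, applied one conformal layer at a time. Next I would show that for each simple $i$, $\ker(Q_{i,\lambda}|_{V_\lambda})=W_{i,\lambda}$ is the maximal $P_i$-submodule of $V_\lambda$: the Felder complex (axiom \ref{Felder complex of FT data}) already displays $W_{i,\lambda}$ as a $P_i$-submodule, while maximality follows because axiom \ref{-1 property of carry-over} forces $(\sigma_i\uparrow\lambda,\alpha_i^\vee)=-1$, so the twisted quotient $W_{i,\sigma_i\ast\lambda}(\sigma_i\uparrow\lambda)$ carries no $P_i$-structure in the relevant weight. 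Since the maximal $G$-submodule sits inside every maximal $P_i$-submodule, the inclusion of the image of $\operatorname{ev}$ into $\bigcap_i\ker Q_{i,\lambda}$ is immediate; for the converse under the stated condition, I would argue that a $B_-$-submodule contained in all the $W_{i,\lambda}$ acquires, for each $i$, a $P_i$-structure on the $B_-$-submodule it generates, and that the hypothesis ``$\lambda\in\Lambda^{\sigma_j}$ or $(\sigma_j\uparrow\lambda,\alpha_i^\vee)=-\delta_{ij}$'' is precisely what lets these rank-one structures be amalgamated into a single $G$-structure, whereas its failure manifests as a nonvanishing $H^1$ along some fibre, exactly as a singular weight does in classical Bott vanishing.

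For part \ref{main theorem 1(2)}, I would feed the Felder complex into the long exact sequence of $H^\bullet(G\times_B-)$ and compute $H^\bullet(G\times_B W)$ for a $P_i$-module $W$ through $p_i$, whose fibrewise part is the $SL_2$ Borel--Weil--Bott dichotomy. Along a minimal expression $w_0=\sigma_{i_{l(w_0)}}\cdots\sigma_{i_0}$, the hypothesis together with the cocycle identity of axiom \ref{associativity of carry-over} guarantees at each layer that the relevant pairing with $\alpha_{i_{m+1}}^\vee$ vanishes, so the connecting maps are isomorphisms and $H^n(G\times_BV_\mu)\simeq H^{n+1}(G\times_BV_{\sigma_i\ast\mu}(\sigma_i\uparrow\mu))$; iterating over $i_0,\dots,i_{l(w_0)}$ gives $H^n(G\times_BV_\lambda)\simeq H^{n+l(w_0)}(G\times_BV_{w_0\ast\lambda}(w_0\uparrow\lambda))$. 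Since $l(w_0)=\dim G/B$ the right-hand side vanishes for $n>0$, which is the higher vanishing; the character formula then follows by computing the Euler characteristic $\sum_n(-1)^n\operatorname{ch}_qH^n$, now equal to $\operatorname{ch}_qH^0$, from the Frobenius-reciprocity/Weyl decomposition of $H^\bullet(G\times_B-)$ applied to the alternating sum supplied by iterated Felder complexes: the inner $\sum_{\sigma\in W}(-1)^{l(\sigma)}$ is the Weyl numerator and the outer $\sum_{\beta\in P_+}\dim L_\beta$ comes from splitting the induced module into irreducibles.

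I expect the main obstacle to be the sharpness in part \ref{main theorem 1(1)}: showing that $\bigcap_i\ker Q_{i,\lambda}$ is no larger than the maximal $G$-submodule \emph{exactly} when the stated condition holds. This is the amalgamation step --- a $B_-$-module that is simultaneously a $P_i$-module for every $i$, with the shifts interacting as the axioms prescribe, must be shown to extend to $G$, and conversely one must exhibit a concrete obstruction (a surviving $H^1$ contribution, as in the singular case of Bott's theorem) when some $(\sigma_j\uparrow\lambda,\alpha_i^\vee)\neq-\delta_{ij}$. In the language of part \ref{main theorem 1(2)} this is the same as checking that one never leaves the Bott-regular chamber during the induction, which is precisely where axioms \ref{associativity of carry-over}--\ref{-1 property of carry-over} are doing all the work.
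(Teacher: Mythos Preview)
The theorem you are attempting to prove is not proved in this paper at all: it is quoted verbatim from \cite[Theorem 1.1, Corollary 1.15]{LS} and used here as a black box. The present paper contains no argument for it, so there is nothing in the paper to compare your proposal against.

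That said, your outline is broadly the strategy one expects from \cite{FT,S1,S2,LS}: reduce to $SL_2$ along the $\mathbb{P}^1$-fibrations $G/B\to G/P_i$, use the Felder short exact sequence in the long exact sequence for $H^\bullet(G\times_B-)$, and iterate along a reduced expression for $w_0$ with the shift-map axioms guaranteeing Bott-regularity at each step. If you want to actually write this proof out, consult \cite{LS} directly; your identification of the amalgamation step in part~\ref{main theorem 1(1)} as the delicate point is accurate, and it is there (rather than in the inductive BWB argument of part~\ref{main theorem 1(2)}) that the precise form of axiom~\ref{-1 property of carry-over} and the hypothesis $(\sigma_j\uparrow\lambda,\alpha_i^\vee)=-\delta_{ij}$ are doing nontrivial work.
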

In this paper, we only consider the case $\g=\sll_2$, $\Lambda=\{\lambda,\sigma_1\ast\lambda\}\simeq\Bits{1}$.
Then the Felder complex is
\begin{align}
\label{B-minus Felder complex}
0
\rightarrow
W_{\pm}
\rightarrow
V_{\pm}
\rightarrow
W_{\mp}(-1)
\rightarrow
0,
\end{align}
where $W_{\pm}$ is the maximal $\SL$-submodules of the weight $B_{-}$-modules $V_{\pm}$, respectively.
By Theorem \ref{main theorem for shift system}, 
\begin{align*}
H^0(\SL\times_{B_{-}}V_{\pm})
\simeq 
W_{\pm},
\ \ 
H^1(\SL\times_{B_{-}}V_{\pm})
\simeq 
0
\end{align*}
and the character formula is given by
\begin{align}
\label{B-minus character formula}
\ch_{q}H^0(\SL\times_{B_{-}}V_{\pm})^{h=m}
=
\sum_{n\geq m}
\ch_qV_{\pm}^{h=n}
-
\ch_qV_{\mp}^{h=n+1}.
\end{align}
On the other hand, we can consider the $B_{+}$-version of Definition \ref{definition of the shift system} and Theorem \ref{main theorem for shift system}. 
The Felder complex 
\begin{align}
\label{B-plus Felder complex}
0
\rightarrow
W_{\pm}
\rightarrow
V_{\pm}
\rightarrow
W_{\mp}(1)
\rightarrow
0
\end{align}
is a short exact sequence of $B_+$-modules,
where $W_{\pm}(1)$ is the maximal $\SL$-submodule of $V_{\pm}(1)$, respectively.
In the same manner as $B_{-}$-case, we have
\begin{align*}
H^0(\SL\times_{B_{+}}V_{\pm})
\simeq 
V_{\pm}/W_{\pm}
\simeq
W_{\mp}(1),
\ \ 
H^1(\SL\times_{B_{+}}V_{\pm})
\simeq 
0
\end{align*}
and the character formula of $H^0(\SL\times_{B_+}V_{\pm})$ is the same as
\eqref{B-minus character formula}.
Denote the $\SL$-module structures by
\begin{align*}
W_{\pm}
\simeq
\bigoplus_{n\geq 0}L_n\otimes W_{\pm,n}
\simeq
\bigoplus_{n\geq 0}\C^{n+1}\otimes W_{\pm,n},
\end{align*}
i.e., $W_{\pm,n}$ is the multiplicity space of  a weight vector of the $(n+1)$-dimensional simple $\SL$-module $L_n\simeq\C^{n+1}$.
By abuse of notation, we sometimes use the same symbol $W_{\mp,n}$ for the corresponding multiplicity space in $V_{\pm}$.

Let us combine the above discussion with Section \ref{section: annotated Loewy diagrams}.
Let $V_{\pm}$ be $R$-modules such that $V_{\pm}$ consists a shift system with respect to a $B$-action commuting with the $R$-action.
Then $W_{\pm}$ and $V_{\pm}/W_{\pm}\simeq W_{\mp}$ are direct sums of $R$-modules $(W_{\pm,n})_{n\geq 0}$ and $(W_{\mp,n})_{n\geq 0}$, respectively.
Let us consider an annotated Loewy diagram of $V_{\pm}$ such that each $W_{\pm,n}$ and $W_{\mp,n}$ are subgraphs.
Then by the compatibility of the $B$-action, in the annotated Loewy diagrams of each $V_{\pm}^h$, there are no edges other than $W_{\pm,n}\rightarrow W_{\mp,n\pm 1}$ (i.e., for a pair of simple composition factors $(X_k,X_{k+1})$, $X_k\rightarrow X_{k+1}$ only if $X_k$ and $X_{k+1}$ are in annotated Loewy diagrams of $W_{\pm,n}$ and $W_{\mp,n\pm 1}$ in that of $V_{\pm}^h$ for some $n$ and $h$, respectively). 

The $(B_{\bullet},m)$-DAGs (resp. $(\SL,m)$-DAGs) in Definition \ref{definition: X-type DAGs} are examples of annotated Loewy diagrams of weight $B_\bullet$-modules $V_{\pm}$ (resp. the $\SL$-modules $W_\pm$) belonging to a ($B_\bullet$-type) shift system.
On the other hand, by considering the compatibility of the $B$-action, fragmented $(X,m)$-DAGs and each $\lambda$-fragments are not annotated Loewy diagrams of a shift system except for $m=0,1$.
As mentioned in Section \ref{section: technical summary} and the last paragraph of Section \ref{subsection: defragmentation}, our aim is to use the ambiguity of module structures to defragment the fragmented $(X,m)$-DAGs so that it might be a shift system.
Unfortunately, the author is still unsure at this time how to construct $X$-actions that would form a recursive shift system from given data such as $\lambda$-fragments or their defragmentations etc. (see also Remark \ref{remark: how to construct the B-actions?} below).
However, on the contrary, if a deformation of the fragmented $(X,m)$-modules form a shift system, then such deformations are to some extent determined.

\begin{lemma}
\label{lemma: shift system roughly determines the structure}
Let $V_{\lambda_1}$ be a deformation of $\bigoplus_{\lambda_{2\dots N}\in\Bits{N-1}}{}_{-}[\lambda_1|\pm][\lambda_{2\dots N}|\pm^{N-1}]$ such that 
$V_{\lambda_1}$ consists a shift system with respect to $\lambda_1\in\Bits{1}$.
Then for each $\lambda_1\in\Bits{1}$,
\begin{align*}
[W_{\lambda_1,h}]=
\begin{cases}
[(\lambda_1|+)(\pm^{N-1}|\pm^{N-1})_{[h-|\lambda_1|_{-}]}]
&h\in 2\Z_{\geq 0}+|\lambda_1|_{-}
\\
0
&h\in 2\Z_{\geq 0}+|\bar{\lambda}_1|_{-}
\end{cases}
\end{align*}
in the Grothendieck group, and
$V_{\lambda_1}$ has an annotated Loewy diagram such that for any $\lambda\in\Bits{N-1}$,  subquotients isomorphic to ${}_{-}[\lambda_1|\pm](\lambda|\pm^{N-1})$ and ${}_{-}[\lambda_1|\pm](\pm^{N-1}|\bar\lambda)$ are contained.
\footnote{Note that if $N\geq 3$, this result does not necessarily lead to $W_{\lambda_1,h}\sim
(\lambda_1|+)(\pm^{N-1}|\pm^{N-1})_{[h-|\lambda_1|_{-}]}$.}
\end{lemma}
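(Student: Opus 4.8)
The plan is to split the statement into two parts: the Grothendieck-group identity for $[W_{\lambda_1,h}]$, obtained by transporting the Weyl-type character formula of the $\sll_2$ shift system into the Grothendieck group, and the existence of the segment subquotients, where the right segments come essentially for free and the left segments are the genuinely delicate point.

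First, for the class of $W_{\lambda_1,h}$. Since each $V_{\lambda_1}$ ($\lambda_1\in\Bits 1$) forms an $\sll_2$ shift system with respect to a $B_-$-action commuting with the $R$-action, Theorem \ref{main theorem for shift system} provides the Felder complex $0\to W_{\lambda_1}\to V_{\lambda_1}\to W_{\bar\lambda_1}(-1)\to 0$, where $W_{\lambda_1}$ is the maximal $\SL$-submodule of $V_{\lambda_1}$; because $R$ commutes with $B_-$, $W_{\lambda_1}$ is an $R$-submodule, the sequence is exact as a sequence of $R$-modules, and $W_{\lambda_1}\simeq\bigoplus_n L_n\otimes W_{\lambda_1,n}$ with the multiplicity spaces $W_{\lambda_1,n}$ being $R$-modules. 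Hence the character formula \eqref{B-minus character formula} holds verbatim in the ($q$-graded) Grothendieck group $K_0(R)$; telescoping it against the elementary relation $[W_{\lambda_1}^{h=m}]-[W_{\lambda_1}^{h=m+2}]=[W_{\lambda_1,m}]$ ($m\ge 0$) and using that $V_{\lambda_1}$ is supported in Cartan weights $\equiv|\lambda_1|_-\pmod 2$ collapses it to $[W_{\lambda_1,m}]=[V_{\lambda_1}^{h=m}]-[V_{\bar\lambda_1}^{h=m+1}]$ for $m\ge|\lambda_1|_-$ with $m\equiv|\lambda_1|_-\pmod 2$. Now $V_{\lambda_1}$, being a deformation of $\bigoplus_{\lambda_{2\dots N}}{}_{-}[\lambda_1|\pm][\lambda_{2\dots N}|\pm^{N-1}]$, has deformation-independent class in $K_0(R)$, read off from the node content and coloring of the $(B_-,N-1)$-DAG ${}_{-}[\lambda_1|\pm](\pm^{N-1}|\pm^{N-1})$: explicitly $[V_{\lambda_1}^{h=n}]$ is the class of the $n$-slice of the zigzag ${}_{-}[\lambda_1|\pm)$ times the class of the $(2N-2)$-cube $(\pm^{N-1}|\pm^{N-1})$. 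Subtracting the two relevant shifted zigzags, whose bit sequences alternate $\lambda_1,\bar\lambda_1,\dots$ and $\bar\lambda_1,\lambda_1,\dots$, makes every node cancel except the single node $(\lambda_1|+)$ at depth $m$, which yields exactly $[(\lambda_1|+)(\pm^{N-1}|\pm^{N-1})_{[m-|\lambda_1|_-]}]$; this is nothing but the $\SL$-content of one $\Gamma$-step of the recursion of Section \ref{subsection: Cartan extension}, run in $K_0$. For $m\equiv|\bar\lambda_1|_-\pmod 2$ the vanishing $[W_{\lambda_1,m}]=0$ is immediate, since $W_{\lambda_1}\subseteq V_{\lambda_1}$ and no $L_n$ with $n\not\equiv|\lambda_1|_-$ can embed into $V_{\lambda_1}$.

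Next, the segment subquotients. The right segment ${}_{-}[\lambda_1|\pm](\pm^{N-1}|\bar\lambda)$ coincides with the right ``fragment'' ${}_{-}[\lambda_1|\pm](\pm^{N-1}|\bar\lambda]$ (Remark \ref{remark: segment and fragment}), which is literally the right half of the bilateral fragment ${}_{-}[\lambda_1|\pm][\lambda|\pm^{N-1}]$, hence a direct summand of the subquotient $M[\lambda]$, hence a subquotient of $V_{\lambda_1}$ for every $\lambda$ in any Loewy series refining the deformation filtration — no shift system is needed here. The left segments ${}_{-}[\lambda_1|\pm](\lambda|\pm^{N-1})$ are subgraphs of the left fragments ${}_{-}[\lambda_1|\pm][\lambda|\pm^{N-1})$ with strictly fewer edges, so they are not subquotients of the undeformed $\bigoplus_\lambda M[\lambda]$ and appear only after re-gluing; here the shift-system structure is the rigidifying input. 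I would choose a Loewy series of $V_{\lambda_1}$ compatible both with the deformation filtration by the $M[\lambda]$ and with the $\SL$-isotypic filtration coming from $W_{\lambda_1}\subseteq V_{\lambda_1}$ and $V_{\lambda_1}/W_{\lambda_1}\simeq W_{\bar\lambda_1}$; the compatibility of the $B$-action forces every edge inside each weight space $V_{\lambda_1}^h$ to join a composition factor of some $W_{\lambda_1,n}$ to one of $W_{\bar\lambda_1,n\pm1}$, and — combined with the class of $W_{\lambda_1}$ just computed — one checks that the induced subquotient on the factors colored as the $(\lambda|\pm^{N-1})$-part of $W_{\lambda_1}$ together with their $B$-partners in $W_{\bar\lambda_1}$ reassembles precisely the $(N-1)$-cube, i.e. the left segment. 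The argument runs over all $\lambda$ simultaneously, so a single annotated Loewy diagram can be made to contain all the required subquotients.

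The main obstacle is this last step. The shift system determines $W_{\lambda_1}$ only up to its $\SL$-isotypic Grothendieck classes and leaves the internal $R$-module structure of the multiplicity spaces $W_{\lambda_1,n}$ undetermined — this is precisely the content of the footnote to the lemma, and the reason one cannot upgrade the first assertion to $W_{\lambda_1,h}\sim(\lambda_1|+)(\pm^{N-1}|\pm^{N-1})_{[h-|\lambda_1|_-]}$ when $N\ge 3$. Consequently the left segments cannot simply be read off $W_{\lambda_1}$; one must instead show that their $(N-1)$-cube structure is nonetheless forced on the relevant ``corners'' of the hypercube by the interaction of the $B$-compatibility of edges with the fixed deformation filtration — equivalently, that any admissible re-gluing is automatically a partial defragmentation (Definition \ref{def: defragmentation}) along the subcube structure of ${}_{-}[\lambda_1|\pm](\pm^{N-1}|\pm^{N-1})$. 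Controlling that interaction is where the bulk of the combinatorial bookkeeping lies.
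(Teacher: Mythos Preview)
Your telescoping of the character formula \eqref{B-minus character formula} to get $[W_{\lambda_1,m}]=[V_{\lambda_1}^{h=m}]-[V_{\bar\lambda_1}^{h=m+1}]$ is a clean alternate route to the first assertion, and it works. The paper takes a different path: it first proves an \emph{inequality} $[W_{\lambda_1,h}]\ge[(\lambda_1|+)(\pm^{N-1}|\pm^{N-1})_{[h-|\lambda_1|_-]}]$ via the transport mechanism described below, and then upgrades it to equality by semisimplicity of the $\SL$-decomposition over all $h$. Both land on the same Grothendieck class, but the paper's route already produces the module-level information needed for the second assertion --- which is exactly what your route discards.

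For the second assertion there is a genuine gap, one you flag but do not close. Knowing $[W_{\lambda_1,h}]$ in $K_0$ gives you the multiset of composition factors of $W_{\lambda_1,h}$, not \emph{which} composition factors of $V_{\lambda_1}^h$ they are, so you cannot simply ``pick out the factors colored $(\lambda|\pm^{N-1})$'' inside $W_{\lambda_1}$. The paper's key idea is what it calls ``duplicating the structures of $h\ge 0$ and $h<0$ each other by comparing them using the $\SL$-action'': at Cartan weight $-h$, $V_{\lambda_1}^{-h}$ is built from the \emph{right} fragments, and the leftmost segment $(\bar\lambda_1|-)(\pm^{N-1}|+^{N-1})$ there lies in the lowest-weight copy of $W_{\lambda_1,h}$. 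Because the $\SL$-action commutes with $R$, it carries this $R$-submodule isomorphically to the highest-weight copy of $W_{\lambda_1,h}$ inside $V_{\lambda_1}^{h}$, where the image lands on the top factors $(+|\lambda_1)(\lambda|+^{N-1})$ of the \emph{left} fragments and hence generates the full left segments $(+|\lambda_1)(\lambda|\pm^{N-1})$ downward through the left-side Loewy structure. This produces honest submodules of $W_{\lambda_1,h}$ isomorphic to the left segments; uniqueness of those segments in $W_{\lambda_1,h}$ and $W_{\bar\lambda_1,h\pm 1}$ then pins down the arrows assembling the bilateral subquotients ${}_{-}[\lambda_1|\pm](\lambda|\pm^{N-1})$. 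The right-segment subquotients are handled by the same mechanism run in reverse (the paper: ``the remaining part is proved in the same manner''); your shortcut via the deformation filtration is not obviously enough, since the right \emph{fragment} carried by $M[\lambda]$ has strictly more edges than the right \emph{segment} the lemma asks for.
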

\begin{proof}
The basic idea is to duplicate the structures of $h\geq 0$ and $h<0$ each other by comparing them using the $\SL$-action.
Let us fix $\lambda_1$ and $h\geq 0$, and focus on the segments $\bigoplus_{\lambda\in\Bits{N-1}}(+|\lambda_1)(\lambda|\pm^{N-1})_{[h-|\lambda_1|_{-}]}$ 
and 
$\bigoplus_{\lambda\in\Bits{N-1}}(\bar{\lambda}_1|-)(\pm^{N-1}|\bar\lambda)_{[h-|\bar{\lambda}_1|_{-}-1]}$ in $V_{\lambda_1}^h$ and $V_{\lambda_1}^{-h}$, respectively.
In the case $h\geq 1$, the $\SL$-action sends the leftmost segment 
$(\bar{\lambda}_1|-)(\pm^{N-1}|+^{N-1})_{[h-|\bar{\lambda}_1|_{-}-1]}$
in the multiplicity space $W_{\lambda_1,h}\subseteq V_{\lambda_1}^{-h}$ of the lowest weight vector  
to the segment 
$\bigoplus_{\lambda\in\Bits{N-1}}(+|\lambda_1)(\lambda|+^{N-1})_{[h-|\lambda_1|_{-}]}$
spanning the top composition factors of the fragments ${}_{-}(\pm|\lambda_1][\lambda|\pm^{N-1})_{[h-|\lambda_1|_{-}]}$ in the multiplicity space
$W_{\lambda_1,h}\subseteq V_{\lambda_1}^{h}$ of the highest weight vector.
In particular, the later also contains (the module isomorphic to) $\bigoplus_{\lambda\in\Bits{N-1}}(+|\lambda_1)(\lambda|\pm^{N-1})_{[h-|\lambda_1|_{-}]}$ as well, and thus $[W_{\lambda_1,h}]\geq[(\lambda_1|+)(\pm^{N-1}|\pm^{N-1})_{[h-|\lambda_1|_{-}]}]$ in the Grothendieck group.
On the other hand, by applying the discussion above to all $h\geq 1$, we have $[W_{\lambda_1,h}]=[(\lambda_1|+)(\pm^{N-1}|\pm^{N-1})_{[h-|\lambda_1|_{-}]}]$ from the semisimplicity of $\SL$-modules.
In the case $h=0$ (and thus $\lambda_1=+$),  replacing the above duplication by the $\SL$-action by that of the Felder complex shows the  similar results for $W_{+,0}$.
Thus, the first half of the assertion is proved.

For a fixed $\lambda_1$, the multiplicity spaces $W_{\bar{\lambda}_1,h+1}$ in the quotient $V_{\lambda_1}/W_{\lambda_1}$ satisfy 
\begin{align*}
[W_{\bar{\lambda}_1,h+1}]
=[(\bar{\lambda}_1|+)(\pm^{N-1}|\pm^{N-1})_{[h+1-|\bar{\lambda}_1|_{-}]}]
=[(-|\lambda_1)(\pm^{N-1}|\pm^{N-1})_{[h-|\lambda_1|_{-}]}]   
\end{align*}
and contain the modules isomorphic to $\bigoplus_{\lambda\in\Bits{N-1}}(-|\lambda_1)(\lambda|\pm^{N-1})_{[h-|\lambda_1|_{-}]}$ by the above discussion.
By the uniqueness of these left segments in $W_{\lambda_1,h}$ and $W_{\bar{\lambda}_1,h+1}$, 
the arrows from (the composition factors isomorphic to) $(-|\lambda_1)(\lambda|\pm^{N-1})_{[h-|\lambda_1|_{-}]}$ to 
$(+|\lambda_1)(\lambda|\pm^{N-1})_{[h-|\lambda_1|_{-}]}$ $(\lambda\in\Bits{N-1})$ consist the $N$-cubes $(\pm|\lambda_1)(\lambda|\pm^{N-1})_{[h-|\lambda_1|_{-}]}$.
In other words, we obtain the arrows from  $W_{\bar{\lambda}_1,h+1}$ to $W_{\lambda_1,h}$  in $V_{\lambda_1}$.
By applying the discussion so far in this proof to $-h\leq -1$, we can also obtain the arrows from  $W_{\bar{\lambda}_1,h-1}$ to $W_{\lambda_1,h}$ in $V_{\lambda_1}$.
By combining these arrows, we obtain the subquotients isomorphic to ${}_{-}[\lambda_1|\pm](\lambda|\pm^{N-1})$ for each $\lambda\in\Bits{N-1}$.
The remaining part is proved in the same manner, and thus the second half of the assertion is also proved.
\end{proof}
By Lemma \ref{lemma: shift system roughly determines the structure}, the structure of shift system roughly determines that of  $V_{\lambda_1}$, in particular, the operation $\Gamma$ in the last paragraph of Section \ref{subsection: Cartan extension} corresponds to the global section functor $H^0(\SL\times_B-)$.
Therefore, by combining the above discussions, we achieve at the recursive shift systems and nested FT constructions.

\begin{definition}
\label{definition: recursive shift systems and nested FT constructions in our cases}
The modules $(V_{\lambda_1})_{\lambda_1\in\Bits{1}}$ 
consists a \textit{recursive shift system}
if 
for any $0\leq n\leq N-1$ and element of the symmetric group $s\in\mathfrak{S}_{N-1}$ permuting $\{2,\dots,N\}$, 
the subquotient
\begin{align}
\label{all shift system in the ecursive shift systems}
\underbrace{
\tilde{H}^0(\SL\times_{B_+}
\tilde{H}^0(\SL\times_{B_+}
\dots
\tilde{H}^0(\SL\times_{B_-}}_{\text{$n$-times}}
V_{\lambda_{1}})_{{s(2)}})\dots)_{{s(n+1)}}
\end{align}
corresponding to 
${s}_{-}[\lambda_1|+][-^{n}\pm^{N-n-1}|+^{n-1}\pm\pm^{N-n-1}]$
is a shift shistem with respect to 
$\lambda_{s(n)}\in\Bits{1}$,
where ${s}_{-}[\lambda_1|\mu_1][\lambda_{2\dots N}|\mu_{2\dots N}]
:=
{}_{-}[\lambda_{1}|\mu_{1}][\lambda_{s(2)\dots s(N-1)}|\mu_{s(2)\dots s(N)}]$.
The $\SL$-modules obtained by applying the global section functor $H^0(\SL\times_B-)$ to \eqref{all shift system in the ecursive shift systems} are collectively referred to as \textit{nested Feigin-Tipunin constructions}.
\end{definition}
If $(V_{\lambda_1})_{\lambda_1\in\Bits{1}}$ 
consists a recursive shift system,
then \eqref{recursive expression of starting socle nodes} is represented by the nested FT constructions
\begin{align}
\label{nested Feigin-Tipunin construction in the main text}
{}_{-}[\lambda_1|+][+^m|-^{m}]
\simeq
\underbrace{
{H}^0(\SL\times_{B_+}
\tilde{H}^0(\SL\times_{B_+}
\dots
\tilde{H}^0(\SL\times_{B_-}}_{\text{$N$-times}}
V_{\lambda_{1}})_{{s(2)}})\dots)_{{s(N)}})
\end{align}
and the character is computed by recursively applying \eqref{B-minus character formula} following the procedure in Section \ref{subsection: derivation of Z-invariants}.

\begin{corollary}
\label{recursive shift system is defragmentation}
If $(V_{\lambda_1})_{\lambda_1\in\Bits{1}}$ 
consists a recursive shift system, then all \eqref{all shift system in the ecursive shift systems} are defragmentations.
\end{corollary}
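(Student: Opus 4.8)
The plan is to induct on the recursion depth, running downward from $n=N-1$ to $n=0$, and to prove the slightly stronger statement that for every $0\le n\le N-1$ and every permutation $s$ the object \eqref{all shift system in the ecursive shift systems}, which I will denote $M_{n,s}$, is both (i) a deformation of the direct sum of the cube-fragments obtained by reading the recursive procedure leading to \eqref{recursive expression of starting socle nodes} exactly $n$ steps in, and (ii) equipped with the annotated Loewy diagram of the corresponding \emph{non}-fragmented fat cube DAG ${}_{B}\mathcal{Q}_{n}(\pm^{N-1-n}|\pm^{N-1-n})$ for a slim bilateral $\mathcal{Q}_{n}$; by Definition \ref{def: defragmentation} this says precisely that $M_{n,s}$ is a defragmentation. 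Item (i) is essentially bookkeeping: by Definition \ref{definition: recursive shift systems and nested FT constructions in our cases} one has $M_{n,s}=\tilde H^{0}(\SL\times_{B}M_{n-1,s})_{s(n+1)}$, and $\tilde H^{0}(\SL\times_{B}-)$ is $H^{0}(\SL\times_{B}-)$ followed by $/\!\sim$; by Theorem \ref{main theorem for shift system}(1) and the rank-one Felder complexes \eqref{B-minus Felder complex}, \eqref{B-plus Felder complex} the functor $H^{0}(\SL\times_{B}-)$ extracts exactly the $\SL$-part dictated by those complexes, so $M_{n,s}$ is realized as a graded subquotient of an $(\SL,N-n)$-DAG object, which by the symbolic computation \eqref{take restricted SL2 component from restricted B}--\eqref{take restricted B component from restricted SL2} is a deformation of the asserted direct sum of cube-fragments. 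The real content is therefore (ii).

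For the base of the induction I would take $n=N-1$ and $n=N-2$: here $M_{n,s}$ has at most one remaining ``fiber'' hypercube direction, so Lemma \ref{lemma: shift system roughly determines the structure} in its one-free-bit form (the regime where the footnote caveat to that lemma is vacuous), together with the edge restriction recorded just before it --- in the annotated Loewy diagram of a shift-system object there are no edges other than $W_{\bullet,k}\to W_{\bullet,k\pm1}$ --- forces the diagram of $M_{n,s}$ to be the cube, the ``lacking'' subcube $\C\otimes W_{0}$ being accounted for exactly as in the last paragraph of Section \ref{subsection: defragmentation} and reading the left/right segment identification off \eqref{B-minus Felder complex} instead of the $\SL$-action when $h=0$. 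Independence of $s$ at this and every stage is the ``Fubini'' compatibility of Remark \ref{Fubini theorem and partial defragmentation}: permuting the order in which $\lambda_{2},\dots,\lambda_{N}$ are processed amounts to reordering iterated defragmentations, and \eqref{Fubini's theorem in our case} identifies the outcomes.

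For the inductive step, assume (i) and (ii) at depth $n$ for all $s$; I want (ii) for $M_{n-1,s}$, already knowing (i) for it and that it is a shift system with respect to the next bit. Lemma \ref{lemma: shift system roughly determines the structure}, applied to $M_{n-1,s}$ with $N-n$ free bits, fixes its class in the Grothendieck group and supplies, for every $\lambda$, subquotients isomorphic to ${}_{-}[\lambda_1|\pm](\lambda|\pm^{N-1-n})$ and to ${}_{-}[\lambda_1|\pm](\pm^{N-1-n}|\bar\lambda)$; ranging over $\lambda$, these realize every edge of the target cube, so by the edge restriction the only freedom that remains is how these overlapping subquotients are glued along their shared nodes --- precisely the ``linear-combination'' ambiguity emphasized in Section \ref{section: technical summary}, which the footnote to Lemma \ref{lemma: shift system roughly determines the structure} warns is genuine for $N\ge 3$. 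To eliminate it I would push the ambiguity forward: since $\Gamma$ is literally $H^{0}(\SL\times_{B}-)$ by Lemma \ref{lemma: shift system roughly determines the structure}, a gluing of $M_{n-1,s}$ other than the cube would, after $\tilde H^{0}(\SL\times_{B}-)$ and grading, produce a graded piece differing from the cube $M_{n,s}$ provided by the inductive hypothesis; performing this for all permutations $s$, so that every pair of cube directions occurs as the $\Gamma$-direction at some stage, forces the gluing of $M_{n-1,s}$ itself to be the cube. This closes the induction and the corollary.

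The main obstacle is exactly this last maneuver for $N\ge 3$: upgrading ``the nested $\tilde H^{0}$-image is the expected cube for every permutation'' to ``the object itself is the cube.'' This requires controlling how $\Gamma=H^{0}(\SL\times_{B}-)$ interacts with the non-split extensions permitted in a deformation --- in effect a partial rigidity statement for $H^{0}(\SL\times_{B}-)$ on the relevant class of $B$-modules --- and it is here, rather than in the single-step Lemma \ref{lemma: shift system roughly determines the structure}, that the full strength of Definition \ref{definition: recursive shift systems and nested FT constructions in our cases} (all depths, all permutations) is needed. I expect the cleanest route is to descend all the way to the edge-free $(\SL,0)$-DAG ${}_{\SL}[\lambda_1-^{N-1}|+^{N}]$ of \eqref{recursive expression of starting socle nodes}, which is $C_{N}$-labeled and so has a unique annotated Loewy diagram, and then propagate this rigidity back up the tower one $\tilde\Gamma$ at a time, using \eqref{B-minus character formula} to match multiplicities at each stage.
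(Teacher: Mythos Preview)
Your overall strategy---induction together with Lemma \ref{lemma: shift system roughly determines the structure}---matches the paper's, and your identification of the inductive step as the crux is correct. The paper organizes the induction on $N$ rather than downward on the depth $n$, but since the depth-$n$ subquotient of a depth-$N$ recursive shift system is itself a recursive shift system with $N-n$ bits, the two schemes are interchangeable.

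Where you diverge is in the execution of the inductive step, and the ``rigidity'' obstacle you flag is not actually there. You phrase the step contrapositively (``a non-cube $M_{n-1,s}$ would produce a non-cube $M_{n,s}$ after $\tilde H^0$''), which would indeed require understanding how $H^0(\SL\times_B-)$ detects failure of cube structure. The paper avoids this entirely by running the implication forward: $\tilde H^0(\SL\times_{B_-}V_{\lambda_1})_{s(2)}$ is by definition a \emph{subquotient} of $V_{\lambda_1}$, so its annotated Loewy diagram---a cube, by the induction hypothesis---is literally a subdiagram of the annotated Loewy diagram of $V_{\lambda_1}$. Restricting to a fixed multiplicity space $W_{\lambda_1,h}$, this subdiagram is the $2(N-2)$-cube $s(\lambda_1|+)(-\pm^{N-2}|\mu_2\pm^{N-2})_{[h-|\lambda_1|_-]}$ for each $\mu_2\in\Bits{1}$. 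Ranging over all $s\in\mathfrak{S}_{N-1}$ gives such subcubes in every ``direction'', and together with the left and right segments already supplied by Lemma \ref{lemma: shift system roughly determines the structure}, these exhaust the edges of the target $2(N-1)$-cube $(\lambda_1|+)(\pm^{N-1}|\pm^{N-1})_{[h-|\lambda_1|_-]}$. No rigidity statement for $H^0$ is needed, and your proposed descent to the $(\SL,0)$-level is unnecessary.

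In short: replace your contrapositive ``push forward'' with the direct observation that a subquotient's Loewy diagram is a subdiagram, and the obstacle dissolves.
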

\begin{proof}
By the induction of $N$, suppose that the assertion is already proved for $m\leq N-1$. 
By Lemma \ref{lemma: shift system roughly determines the structure}, it suffices to show that the multiplicity spaces $W_{\lambda_1,h}$ of the weight vectors in $V_{\lambda_{1}}$ satisfy
\begin{align}
\label{each weight vector is hypercubic}
W_{\lambda_1,h}
\sim
(\lambda_1|+)(\pm^{N-1}|\pm^{N-1})_{[h-|\lambda_1|_{-}]}.
\end{align}
By applying the assumption of the induction to the subquotient $\tilde{H}^0(\SL\times_{B_{-}}V_{\lambda_1})_{s(2)}$ in \eqref{all shift system in the ecursive shift systems} for each $\lambda_1\in\Bits{1}$ and $s\in\mathfrak{S}_{N-1}$, we see that the annotated Loewy diagram of $W_{\lambda_1,h}$ must contain $2(N-1)$-cubes $s(\lambda_1|+)(-\pm^{N-2}|\mu_2\pm^{N-2})_{[h-|\lambda_1|_{-}]}$ ($\mu_2\in\Bits{1}$).
By combining this fact with Lemma \ref{lemma: shift system roughly determines the structure} for $W_{\lambda_1,h}$, \eqref{each weight vector is hypercubic}
is proved.
\end{proof}

\begin{remark}
\label{remark: how to construct the B-actions?}
The question of how to construct a $B$-action such that a given deformation or defragmentation defines a shift system seems non-trivial.
On the LCFT side, in the case $N=1$, such a $B$-action is easily obtained by considering the zero-modes preserving the conformal weights, called long screening operators (for the most general discussion in the principal nilpotent cases, see \cite[Section 2.4]{LS}).
However, already in the case of $N=2$, such a $B$-action is given by a complicated integral \cite[Section 4.5]{TW} related to \textit{Lusztig's divided power} \cite[Section 3]{FGST}\footnote{Even in the case of $N=1$, the Lusztig's divided power of the short screening operators is required the to consider the $B_+$-action.}.
Furthermore, as mentioned above, the Virasoro module structures of the Fock modules give the $\lambda$-fragments in the cases of $N=1,2$ \cite[Section 2]{FGST}, but no such decompositions are known in the case of $N\geq 3$.
It is a future work to come up with a mechanism to generate such long/short screening operators.
\end{remark}

In conclusion, let us provide an expected abelian categorification of $\hat{Z}$-invariants for Seifert $3$-manifolds with $(N+2)$-exceptional fibers. 
 In other words, it is an abelian category with the shift systems equipped with the compatible ``fragmentations" and ``defragmentations" of them along the structure of the hypercube graph.
\begin{definition}
\label{definition: hypercubic categorification}
For $N\in\Z_{\geq 1}$, an abelian category (of $R$-modules) $\mathcal{C}_N$ is called \textit{hypercubic categorification} (of $\hat{Z}$-invariants for Seifert $3$-manifolds with $(N+2)$-exceptional fibers) if
\begin{itemize}
\item
${}_{-}[\pm|\pm][\pm^{N-1}|\pm^{N-1}]=\bigoplus_{\lambda_1\cdots\lambda_N\in\Bits{N}}{}_{-}[\lambda_1|\pm][\lambda_{2\dots N}|\pm^{N-1}]\in\mathcal{C}_N$ (see Section \ref{subsection: Cartan extension}).
\item 
For any $I,I'\subseteq\{2,\dots,N\}$ such that $I\cap I'=\phi$, the defragmentation
$\int_{\lambda_I\in\Bits{I}}\int_{\lambda_{I'}\in\Bits{I'}}{}_{-}[\lambda_1|\pm][\lambda_{2\dots N}|\pm^{N-1}]$ satisfying the Fubini's theorem exists (see Remark \ref{Fubini theorem and partial defragmentation}).
\item 
The modules $(V_{\lambda_1}:=\int_{\lambda_{2\dots N}\in\Bits{N-1}}{}_{-}[\lambda_1|\pm][\lambda_{2\dots N}|\pm^{N-1}])_{\lambda_1\in\Bits{1}}$ consists a recursive shift system (see Definition \ref{definition: recursive shift systems and nested FT constructions in our cases}), and the ($\SL$-)modules $P_{\pm}$ in Section \ref{subsection: defragmentation} also exists.
\end{itemize}
\end{definition}
As detailed in the next subsection, such an abelian category can be viewed as the result of applying the forgetful functor to the representation category of an expected LCFT corresponding to the Seifert $3$-manifold.
Although such LCFTs have not yet been constructed in general cases, both the shift system theory and the hypercubic structure of the category that allows its recursive application are abelian categorical, providing a hypothetical and prototypical, but unified construction/research method for such LCFTs (see Section \ref{subsection: background, motivation and program}).

\subsection{Glimpse of LCFTs for Seifert $3$-manifolds}
\label{subsection: glimpse}
Let us give the correspondence between our discussion and hypothetical LCFTs for Seifert $3$-manifolds with $(N+2)$-exceptional fibers, which we have described piecemeal so far (recall the last paragraph of Section \ref{subsection: derivation of Z-invariants}, and Remark \ref{remark: from singlet to triplet}, \ref{remark: why one direction is reversed?}, \ref{remark: how to construct the B-actions?}, etc.).
For $N=1,2$, the corresponding LCFTs are already known respectively (e.g., \cite{LS,CNS} for $N=1$, \cite{FGST,TW,N} for $N=2$), from which we can infer what the representation theories of LCFTs corresponding to general $N$ are.

In this paper, we consider the case $\g=\sll_2$.
Let $p_1,\dots,p_N\in\Z_{\geq 2}$ are coprime integers.
This integer sequence $(p_1,\dots,p_N)$ represents the ``level" of the LCFT. 
For example, for $N=1,2$, the levels of the LCFTs are $1/p_1$ and $p_2/p_1$, respectively
\footnote{There are situations where $p_1$ or ${p_1}/{p_2}$ might be considered by Feigin-Frenkel duality, but the details are not considered here.}.
Of course, it is unclear what ``number" can be interpreted as the level in the case of $N\geq 3$, but we shall for the moment consider this $(p_1,\dots,p_N)$ as the level.
In correspondence with the LCFTs, the ring $R$ is intended to be the ``universal enveloping algebra of the Virasoro VOA (in general, W-algebra) at level $(p_1,\dots,p_N)$", and the nodes of each DAG (or annotated Loewy diagram) represent simple composition factors of such ``Virasoro VOA".
For $N=1,2$, the Virasoro VOAs are embedded in the Heisenberg VOAs (Fock modules) with the same levels, and simple modules of the (rescaled root) lattice VOAs are obtained by aligning the Fock modules along the lattice $\sqrt{p_1\dots p_N}Q_{\sll_2}=\sqrt{2p_1\cdots p_N}\Z$.
Recall that the simple modules of such a lattice VOA is parametrized by the finite set $(\sqrt{2p_1\cdots p_N}\Z)^\ast/\sqrt{2p_1\cdots p_N}\Z$ with $2p_1\cdots p_N$ representatives
\begin{align*}
(\lambda_1,r_1,\cdots,r_N)
\ \ \ \ 
(\lambda_1\in\Bits{1},
\ 
1\leq r_i\leq p_i).
\end{align*}
Let us fix $(r_1,\dots,r_N)$ such that $1\leq r_i<p_i$ for each $1\leq i\leq N$.
In this case, the nodes colored by $(\lambda,d)\in\Bits{N}\times\Z_{\geq 0}$ in the DAGs or annotated Loewy diagrams shall represent simple composition factors parametrized by the representative
$(\lambda_1,\lambda_1r_1,\cdots,\lambda_Nr_N)$, where $-r_i$ intends to $p_i-r_i$,  and with the top ``conformal weight" $\Delta_{\lambda,d}$ in  \eqref{character of general Fock module}. 
From known results for $N=1,2$, the following  ``dictionary" can be inferred.
\renewcommand{\arraystretch}{1.2}
\begin{table}[htbp]
\centering
\begin{tabular}{|l|l|} \hline
\textbf{DAGs (annotated Loewy diagrams)} & \textbf{logarithmic CFTs}
\\ \hline\hline
${}_{-}[\lambda_1|\pm)[\lambda_{2\cdots N}|\pm^{N-1}]_{[2n]}$, 
${}_{-}(\pm|\bar{\lambda}_1][\lambda_{2\cdots N}|\pm^{N-1}]_{[2n]}$
& ``Fock modules"
\\ \hline
${}_{-}[\lambda_1|\pm][\lambda_{2\cdots N}|\pm^{N-1}]$ 
& ``simple modules of the lattice VOAs"
\\ \hline
${}_{-}(+|+)(+^{N-1}|\pm^{N-1})$, $\forall r_i=1$
&``universal Virasoro VOA" (i.e., $R$)
\\ \hline
${}_{-}(+|+)(+^{N-1}|\pm^{N-1})
\cup
{}_{-}[+|+)(+^{N-1}|-^{N-1})$, $\forall r_i=1$
&``singlet Virasoro VOA" 
\\ \hline
${}_{-}(+|+)(+^{N-1}|\pm^{N-1})
\cup
{}_{-}[+|+](+^{N-1}|-^{N-1})$, $\forall r_i=1$
&``triplet Virasoro VOA" 
\\ \hline
${}_{-}[\lambda_1|+](+^{N-1}|-^{N-1})$
&``simple modules of the triplet Virasoro VOA" 
\\ \hline
${}_{-}(\lambda_1|+)(+^{N-1}|\mu)_{[k]}$, $0\leq k< N-1-|\mu|_{-}-|\lambda_1|_{-}$
&``simple modules of the (triplet) Virasoro VOA" 
\\ \hline
${}_{-}[\lambda_1|+](+^{N-1}|-^{N-1})
\subset 
P_{\lambda_1}$ (see Section \ref{subsection: defragmentation})
&``projective covers of the simple modules" 
\\ \hline
$\int{}_{-}[\lambda_1|\mu_1][\lambda_{2\cdots N}|\mu_{2\cdots N}]$, $\forall\mu_i\in\{+,-,\pm\}$
&``logarithmic modules of the triplet Virasoro VOA" 
\\ \hline
\end{tabular}
\caption{Glimpse of the ``dictionary" between DAGs and LCFTs ($\forall r_i<p_i$).}
\label{table:dictionary}
\end{table}

As mentioned in Section \ref{section: technical summary}, if we consider the bits that parametrize the simple composition factors as variables, then the reduction of variables by defragmentation $\int$ in Definition \ref{def: defragmentation} is done by considering the corresponding $2n$-cube graphs (resp. $(2n+1)$-cube graphs) in DAGs as new ``nodes" (resp. ``edges").
In particular, the Definition \ref{definition: recursive shift systems and nested FT constructions in our cases} of a recursive shift system enables us to apply the shift system theory (Section \ref{subsection: recursive shift systems}) by the ``univariation" at each recursive step, and by applying this to the end along the procedure in Section \ref{subsection: derivation of Z-invariants}, we obtain the nested FT constructions \eqref{nested Feigin-Tipunin construction in the main text} of the ``simple modules of the triplet Virasoro VOA" above.
The recursive combinatorial derivation of $\hat{Z}$-invariants in Section \ref{subsection: derivation of Z-invariants} is regarded as the nested FT construction at the level of the Grothendieck group, and \eqref{character of general singlet module} is the Weyl-type (or bosonic) character formula of \eqref{nested Feigin-Tipunin construction in the main text}. 
On the other hand, the above reduction of variables occurs not only by defragmentation, but also by setting $r_i=p_i$ or by reducing $N$.
The former is the state in which the Felder complex with respect to that variable is degenerate, and in the most extreme case $(\forall r_i=p_i)$, the simple module becomes projective.
However, even in such a degenerate case, the (recursive) shift system theory can be considered for non-degenerate variables.

\end{document}